\crefname{thm}{Theorem}{Theorems}
\theoremstyle{plain}
\newtheorem{thm}{Theorem}
\newtheorem{lemma}[thm]{Lemma}
\newtheorem{prop}[thm]{Proposition}
\theoremstyle{definition}
\newtheorem{defn}[thm]{Definition}
\theoremstyle{remark}
\newtheorem*{rem}{Remark}
\numberwithin{thm}{section}
\numberwithin{equation}{section}
\newcommand{\ceil}[1]{\left\lceil #1 \right\rceil}
\newcommand{\paren}[1]{\left( #1 \right)}
\newcommand{\set}[1]{\left\{ #1 \right\}}
\newcommand{\nm}{{\lVert\cdot\rVert}}
\newcommand{\R}{\mathbb R}
\newcommand{\Sp}{\mathbb S}
\newcommand{\Z}{\mathbb Z}
\newcommand{\cA}{\mathcal A}
\newcommand{\cB}{\mathcal B}
\newcommand{\cU}{\mathcal U}
\newcommand{\cX}{\mathcal X}
\renewcommand{\epsilon}{\varepsilon}
\renewcommand{\phi}{\varphi}
\DeclareMathOperator{\spn}{span}
\DeclareMathOperator{\sign}{sgn}
\let\int\undefinied
\DeclareMathOperator{\int}{int}
\title[More unit distances in arbitrary norms]{More unit distances in arbitrary norms}
\author[Greilhuber]{Josef Greilhuber}
\author[Schildkraut]{Carl Schildkraut}
\author[Tidor]{Jonathan Tidor}
\thanks{Schildkraut was supported by NSF Graduate Research Fellowship Program DGE-2146755.
Tidor was supported by a Stanford Science Fellowship.}
\address{Department of Mathematics, Stanford University, Stanford, CA 94305, USA}
\email{\textnormal{\{}jgreil, carlsch, jtidor\textnormal{\}}@stanford.edu}
\subjclass[2020]{Primary: 52C10; Secondary: 52A20, 05C62}
\begin{document}

\begin{abstract}
For $d\geq 2$ and any norm on $\R^d$, we prove that there exists a set of $n$ points that spans at least $(\tfrac d2-o(1))n\log_2n$ unit distances under this norm for every $n$. This matches the upper bound recently proved by Alon, Buci\'c, and Sauermann for typical norms (i.e., norms lying in a comeagre set). We also show that for $d\geq 3$ and a typical norm on $\R^d$, the unit distance graph of this norm contains a copy of $K_{d,m}$ for all $m$.
\end{abstract}

\maketitle

\section{Introduction}
One of the most well-known problems in discrete geometry is the Erd\H{o}s unit distance problem. This asks for the maximum number of pairs of points at distance 1 among a set of $n$ points in $\R^2$. In 1946, Erd\H{o}s conjectured that the answer is given by an appropriately scaled $\sqrt{n}\times\sqrt{n}$ section of the integer lattice, which determines $n^{1+c/\log\log n}$ unit distances \cite{Erd46}. Despite considerable effort, the best known upper bound on this problem is $O(n^{4/3})$, proved in 1984 by Spencer, Szemer\'edi, and Trotter \cite{SST84}. 

One reason that explains the difficulty of improving this bound comes from studying the problem in other norms. A later proof of Sz\'ekely \cite{Sze97} shows that the Spencer--Szemer\'edi--Trotter bound of $O(n^{4/3})$ unit distances holds for any strictly convex norm on $\R^2$. Furthermore, Valtr observed that the norm whose unit ball is given by $|y|+x^2\leq 1$ achieves this bound \cite{Val05}. Thus, to improve the $O(n^{4/3})$ bound, one needs to use a property of the Euclidean norm which is not true of all strictly convex norms.

Matou\v{s}ek first studied the unit distance problem for typical norms. He showed that most norms on $\R^2$ span at most $O(n\log n \log\log n)$ unit distances. Here, ``most'' means for a comeagre set of norms in the sense of the Baire category theorem. We will define this formally in \cref{sec:prelim}. Given a norm $\nm$ on $\R^d$, we define $U_{\nm}(n)$ to be the maximum number of unit distances spanned by a set of $n$ points in $\R^d$ under this norm. In this notation, Matou\v{s}ek's result can be stated as follows:

\begin{thm}[{\cite[Theorem 1.1]{Mat11}}]
For most norms $\nm$ on $\R^2$,
\[U_{\nm}(n)=O(n\log n\log\log n).\]
\end{thm}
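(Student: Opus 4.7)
The plan is to construct a comeagre collection of ``typical'' norms via Baire category, and then prove the upper bound for each such norm by a multi-scale combinatorial argument.

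For the Baire category step, I would define a countable family $\{G_k\}_{k\in\N}$ of open dense subsets of the space of norms on $\R^2$, equipped with a suitable metric such as the Hausdorff distance between unit balls. Each $G_k$ encodes a quantitative non-degeneracy condition on the unit sphere $S$: every arc of $S$ of angular extent at least $1/k$ is at Hausdorff distance more than $1/k$ from any algebraic curve of degree at most $k$. Openness follows from strict inequalities in the definition and stability under small perturbations; density follows because any norm can be locally perturbed by a small wavy modification of its unit ball that destroys algebraic approximations on each arc while preserving convexity. By the Baire category theorem, $\bigcap_k G_k$ is comeagre, and I call norms in this intersection \emph{typical}.

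For the upper bound, fix a typical norm $\nm$ and a set $P\subset\R^2$ with $|P|=n$. The idea is a dyadic decomposition by direction: partition $S$ into arcs $A_1,\ldots,A_m$ of angular extent $\sim 1/m$, and group the unit distance pairs $(p,q)$ by the arc containing $q-p$. Within each arc, unit distances correspond to translations by a small family of near-parallel vectors, and an incidence bound---strengthened by typicality to rule out the structured configurations behind Valtr's lower bound---controls the count. Combining a geometric decomposition with summation over dyadic scales of $m$ should produce a near-linear contribution per scale, and optimizing the scale choice yields $O(n\log n\log\log n)$.

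The main obstacle is the combinatorial upper bound rather than the category construction. Strict convexity of the unit ball---which already holds on a comeagre set---only yields a $K_{2,3}$-free unit distance graph and hence an $O(n^{3/2})$ bound via the K\H{o}v\'ari--S\'os--Tur\'an theorem, whereas Valtr's example demonstrates that even strictly convex norms can attain $\Theta(n^{4/3})$ unit distances. The full strength of typicality must therefore be invoked to suppress approximately algebraic behavior of the unit sphere at all scales simultaneously, converting a qualitative ``no algebraic arcs'' hypothesis into a quantitative polylogarithmic-per-scale bound uniform in $n$. This conversion---essentially a quantitative diagonalization against algebraic approximation of $S$, tuned to the parameter $n$---is the core technical hurdle and is responsible for the $\log n \log\log n$ factor in the final bound.
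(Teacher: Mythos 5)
This theorem is stated as a citation of Matou\v{s}ek's work and is not reproved in the present paper, so there is no in-paper proof to compare against; but the paper does indicate the mechanism in the discussion following \cref{thm:abs-upper}: the genericity condition used by Matou\v{s}ek (and refined by Alon--Buci\'c--Sauermann) is that the set of unit vectors of the norm satisfies no unusually many \emph{short rational linear dependencies}, and the combinatorial argument converts a high edge count in the unit distance graph into such dependencies. Your proposal uses a different genericity condition (no arc of the unit circle is close to a low-degree algebraic curve) and a different combinatorial engine (dyadic decomposition by direction together with incidence bounds); it is therefore a genuinely different route, not a reconstruction of Matou\v{s}ek's argument.

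The gap in your route is the combinatorial step, and it is not merely unfinished but structurally misaimed. Incidence-type bounds for points and translated copies of a strictly convex curve in the plane top out at $O(n^{4/3})$, and Valtr's example shows this is tight, so no improvement of the incidence machinery by itself can reach $n\log n\log\log n$; the near-linear bound requires a qualitatively different input. What Matou\v{s}ek actually exploits is additive structure: a dense subgraph of the unit distance graph produces, via paths and cycles, many integer linear relations among a bounded set of unit vectors, and the Baire-generic ``no short rational dependencies'' condition forbids these relations unless the degree is small. Your ``no algebraic arc'' condition does not obviously imply this additive non-degeneracy (a transcendental arc can still carry dense rational relations among its points), and even if it did, you would still need the graph-theoretic/additive lemma connecting edge count to short relations, which does not appear in your sketch. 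The phrase ``incidence bound strengthened by typicality'' names the missing ingredient without supplying it. The Baire category scaffolding (openness from strict inequalities, density from wavy perturbations) is fine in outline, but you should reformulate the genericity condition directly in terms of rational dependencies of unit vectors, as that is what the combinatorics consumes.
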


In addition to greatly improving the bound of $O(n^{4/3})$, Matou\v{s}ek's bound is even smaller than $n^{1+c/\log\log n}$. Thus, the usual Euclidean norm has some special property that allows it to span somewhat more unit distances than typical norms. Furthermore, a simple construction shows that $U_{\nm}(n)=\Omega(n\log n)$ for every norm, so Matou\v{s}ek's result is tight up to a $\log\log n$ factor.

Recently, Alon, Buci\'c, and Sauermann removed the $\log\log n$ factor, proving a bound which is tight up to a constant multiplicative factor. They also generalized the result to all dimensions.

\begin{thm}[{\cite[Theorem 1.1]{ABS25}}]
\label{thm:abs-upper}
For most norms $\nm$ on $\R^d$,
\[U_{\nm}(n)\leq \frac d2 n\log_2 n.\]
\end{thm}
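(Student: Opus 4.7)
My plan combines a Baire category setup with a divide-and-conquer argument on the point set.

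I would begin by identifying norms on $\R^d$ with their (closed) unit balls, viewed as centrally symmetric convex bodies under the Hausdorff metric; this is a Polish space, and ``most norms'' refers to a comeagre subset. The first step is to extract a useful combinatorial consequence of Baire genericity: for a typical norm $\nm$, the unit sphere $S = \set{x \in \R^d : \norm{x} = 1}$ should be strictly convex and smooth, and satisfy a strong non-degeneracy condition ensuring that no exact algebraic or linear relation with rational coefficients holds among any prescribed finite configuration of points of $S$. Any such ``forbidden relation'' property is generic because the set of norms failing it is nowhere dense, and a countable intersection of such open dense sets remains comeagre.

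The main technical step would be a \emph{bisection lemma}: for every typical norm and every finite set $P \subset \R^d$ with $\abs{P} = n \ge 2$, there exists a hyperplane $H$ splitting $P$ into parts $A, B$ with $\abs{A}, \abs{B} \le n/2$ such that the number of unit distance pairs $\set{p,q} \subset P$ with $p \in A$ and $q \in B$ is at most $\tfrac{d}{2} n$. Given this lemma, strong induction on $n$ via the recursion
\[
U_{\nm}(n) \le U_{\nm}(\abs{A}) + U_{\nm}(\abs{B}) + \tfrac{d}{2}n \le 2\,U_{\nm}(n/2) + \tfrac{d}{2}n
\]
solves to $U_{\nm}(n) \le \tfrac{d}{2} n \log_2 n$.

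The hard part is the bisection lemma, and especially pinning down the sharp constant $\tfrac{d}{2}$. This bound cannot follow from any bipartite unit-distance inequality that ignores the sizes of the two sides: as the abstract of this paper indicates, for $d \ge 3$ a typical norm supports $K_{d,m}$ in its unit distance graph for every $m$, so the bipartite unit distance count between fixed sets can be unbounded. The balancedness $\abs{A}, \abs{B} \le n/2$ must therefore do real work. My attempt would be to choose a generic direction $v$, order $P$ by its $v$-coordinate, and bisect at the median index; the crossing unit distance pairs $\set{p,q}$ with $q - p \in S$ would then be controlled by a charging scheme assigning at most $O(d)$ edges to each boundary point using smoothness and strict convexity of $S$. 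Getting the constant to be exactly $\tfrac{d}{2}$ rather than $d$ would likely require either averaging over $v$ or continuously sliding the cut and integrating, in both cases exploiting the non-degeneracy of $S$ established in the first step to prevent too many edges from concentrating at any single position of the hyperplane.
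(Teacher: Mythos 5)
This theorem is cited from Alon--Buci\'c--Sauermann and is not reproved in the present paper, so there is no in-paper argument to compare against. The paper does summarize the ABS approach, and it is quite different from yours: ABS first prove a structural lemma saying that for a comeagre set of norms, the unit vectors satisfy no ``unusually short'' nontrivial rational linear dependencies, and then combine this with an iterative graph-theoretic argument (roughly, peeling off low-degree vertices and showing that a subgraph of the unit distance graph with large minimum degree must have many vertices). No balanced separator or divide-and-conquer step appears. So your proposal is a genuinely different route, and the recursion arithmetic $U(n)\le 2U(n/2)+\tfrac{d}{2}n \Rightarrow U(n)\le\tfrac{d}{2}n\log_2 n$ is fine as far as it goes.

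The problem is that your key bisection lemma is left entirely unproven, and the mechanism you propose for it does not work. You suggest charging crossing edges to boundary points so that each boundary point receives $O(d)$ edges, citing smoothness and strict convexity. But a single point $p$ just below the cutting hyperplane $H$ has unit sphere $p+\partial B$ whose intersection with the half-space above $H$ is a whole $(d-1)$-dimensional cap, and nothing about genericity of the norm prevents arbitrarily many points of $P$ from sitting on that cap; indeed this is exactly the phenomenon behind the $K_{d-1,\infty}$ and $K_{d,m}$ results you mention. So there is no pointwise $O(d)$ bound. Your fallback idea of averaging over positions of the sliding cut also does not obviously help: to balance the recursion you must cut at (or near) the median, so you cannot average over all positions, and you would in any case need a bound on $\sum_{\text{edges}} |v\cdot(q-p)|$ that is not apparent. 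Any successful version of the bisection lemma would seemingly need precisely the kind of quantitative algebraic-independence input that ABS establish, at which point it is unclear that the divide-and-conquer wrapper is buying anything. In short: the genericity setup is sound, the recursion is sound, but the central lemma is a real gap, and the sketch you give for it fails.
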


This upper bound holds for any norm such that the set of unit vectors do not satisfy unusually many short rational linear dependencies. Alon, Buci\'c, and Sauermann then showed that for most norms, their unit vectors do not satisfy these short rational linear dependencies.

For each norm, they also gave a family of constructions with almost-matching leading constant.

\begin{thm}[{\cite[Theorem 1.2]{ABS25}}]
\label{thm:abs-lower}
For every norm $\nm$ on $\R^d$,
\[U_{\nm}(n)\geq \paren{\frac d2-\frac12-o(1)} n\log_2 n.\]
Here the $o(1)$ term goes to $0$ as $n\to\infty$ for each fixed $d$.
\end{thm}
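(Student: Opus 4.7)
The plan is to construct the required point set via iterated doubling. Starting from $P_0 = \{0\}$, at step $k$ I pick a unit vector $w_k$ and set $P_{k+1} = P_k \cup (P_k + w_k)$, so $|P_k| = 2^k$. Each doubling automatically contributes $|P_k|$ ``standard'' unit distances $\{p, p + w_k\}$, giving the recursion $e_{k+1} \geq 2 e_k + |P_k|$ and the baseline bound $e_k \geq \tfrac{1}{2} n \log_2 n$ with $n = |P_k|$. To reach the claimed constant $\tfrac{d-1}{2}$, I want each step to contribute an extra $(d-2)|P_k|$ ``bonus'' unit distances of the form $\{p, q + w_k\}$ with $p \neq q$ in $P_k$ and $\|p - q - w_k\| = 1$; the upgraded recursion $e_{k+1} \geq 2 e_k + (d-1)|P_k|$ then solves to $e_k \geq \tfrac{d-1}{2} n \log_2 n$.

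Bonus edges correspond to pairs $(p, q) \in P_k \times P_k$ whose difference lies on the shifted unit sphere $w_k + S$, where $S = \{v \in \R^d : \|v\| = 1\}$. To produce many such pairs, I would build $P_k$ as a Minkowski sum of unit vectors $u_1, \ldots, u_k$ chosen with rich algebraic relations. Since $S$ is a connected $(d-1)$-dimensional manifold when $d \geq 2$, an intermediate-value argument produces unit vectors satisfying prescribed linear relations: for instance, $u, v \in S$ with $u + v \in S$. Iterating this type of construction makes many short signed sums $\sum_i \epsilon_i u_i$ (with $\epsilon_i \in \{-1, 0, 1\}$) themselves unit vectors, populating $P_k - P_k$ with elements lying on the shifted sphere $w_k + S$ for a well-chosen $w_k$.

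The main obstacle is a quantitative lemma: at every step, a unit vector $w_k$ exists that delivers the required $(d-2)|P_k|$ bonus edges. I would approach this by averaging over candidate directions $w \in S$ combined with dimension counting --- each condition ``$v - w \in S$'' for a fixed nonzero $v$ carves out a codimension-$1$ submanifold of $S$, so the $(d-1)$-dimensional sphere has enough room to accommodate many such conditions simultaneously. The most delicate aspect is designing the sequence of generators $u_i$ inductively: each new $u_{k+1}$ must keep $P_{k+1}$ in generic position (to avoid unwanted collisions) while preserving enough unit-norm differences for the bonus to compound correctly through the recursion. A standard interpolation at the end extends the bound from $n = 2^k$ to arbitrary $n$, yielding the stated $o(1)$ loss.
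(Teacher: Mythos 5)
Your target bonus count is not supported by the dimension heuristic you sketch, and for $d\geq 4$ this is a genuine gap. Each condition ``$u_i-w_k\in S$'' for a generator $u_i$ of the current $\{0,1\}$-GAP $P_k$ cuts out a codimension-one subset of the $(d-1)$-dimensional sphere $S$; one can therefore impose at most $d-1$ such conditions on $w_k$ and still expect solutions. But each satisfied condition accounts only for the pairs $(p,q)\in P_k\times P_k$ with $p-q=u_i$, of which a proper GAP has $|P_k|/2$, not $|P_k|$. So the most the scheme delivers is about $\tfrac{d-1}{2}|P_k|$ bonus edges per step, which is short of the required $(d-2)|P_k|$ for every $d\geq 4$; the recursion then yields only $e_k\gtrsim\tfrac{d+1}{4}n\log_2 n$, strictly weaker than $\tfrac{d-1}{2}n\log_2 n$. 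Targeting longer differences $u_i\pm u_j$ does not help since those have only $\approx|P_k|/4$ representations each, and averaging over $w\in S$ also fails because each condition is a measure-zero constraint on $w$. Finally, the inductive design of the $u_i$ to maintain ``rich algebraic relations'' while avoiding GAP collisions --- which you flag as the most delicate point --- is precisely where the content of the proof lies, and your sketch provides no mechanism for it.

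The present paper quotes this theorem from \cite{ABS23} without reproving it, but its construction for the stronger \cref{thm:main} shows what is structurally required. Rather than iterated doubling with only $\{0,1\}$-coefficients, one builds a generalized arithmetic progression in which several coordinates have ranges $m$ and $m^2$. This lets one package, alongside the $m$ unit vectors $p_1,\dots,p_m$, an additional $(d-1)m$ unit vectors $q_{ij}=p_j-(1+j\lambda)t_iw_i$ \emph{as GAP differences}, yielding $dm$ unit directions that each contribute roughly $|S|/2$ pairs. Finding the $p_j$ so that all $dm$ of these points lie on $\partial B$ simultaneously is the substantive step; it uses the shadow boundary of $B$ and a dimension-theoretic argument (\cref{lem:equator} and \cref{thm:phi-arith-prog}). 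Your sketch has no analogue of this: the unit-vector directions need to be designed globally at once, not accumulated one doubling step at a time.
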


Our main result removes the $1/2$ in this bound, matching the upper bound provided by \cref{thm:abs-upper}.

\begin{thm}
\label{thm:main}
For $d\geq 2$ and every norm $\nm$ on $\R^d$,
\[U_{\nm}(n)\geq \paren{\frac d2-o(1)} n\log_2 n.\]
Here the $o(1)$ term goes to $0$ as $n\to\infty$ for each fixed $d$.
\end{thm}

Our second result is about large complete bipartite graphs in the unit distance graph of typical norms. For a norm $\nm$ on $\R^d$, its \emph{unit distance graph} is the graph with vertex set $\R^d$ where two vertices are adjacent if they are at distance 1 under $\nm$. We show that, for $d\geq 3$, one can find a copy of $K_{d,m}$ for arbitrarily large $m$ in the unit distance graph of a typical norm on $\R^d$.

This means that for a typical norm $\nm$ on $\R^d$, one can find $d$ translates of the unit sphere in this norm whose intersection has arbitrarily large finite size. Heuristically, the intersection of $d-1$ translates of the unit sphere should be a 1-dimensional manifold, so one should expect to be able to find a copy of $K_{d-1,\infty}$ in the unit distance graph of any norm. (Indeed, Alon, Buci\'c, and Sauermann confirm this intuition \cite[Lemma 7.4]{ABS25}.) However the intersection of $d$ translates of a unit sphere is usually 0-dimensional and one might expect its size to be typically bounded. For example, the unit distance graph of any strictly convex norm on $\R^2$ is $K_{2,3}$-free. For $d\geq 3$, we disprove this intuition for most norms, though the proof exploits the peculiarities of the definition of ``most'' quite strongly.

\begin{thm}
\label{thm:bipartite-main}
For $d\geq 3$ and most norms $\nm$ on $\R^d$, the unit distance graph of $\nm$ contains a copy of $K_{d,m}$ for every $m$.
\end{thm}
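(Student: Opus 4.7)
The plan is to show that for each $m\geq 1$, the set $\mathcal{N}_m$ of norms $\nm$ on $\R^d$ whose unit distance graph contains a copy of $K_{d,m}$ contains a dense open subset of the space of norms. The theorem then follows from the Baire category theorem, since $\bigcap_m\mathcal{N}_m$ is comeagre.

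\textbf{Openness.} A copy of $K_{d,m}$ in the unit distance graph of $\nm$ is a tuple $(p_1,\ldots,p_d,q_1,\ldots,q_m)\in(\R^d)^{d+m}$ satisfying the $dm$ equations $\|p_i-q_j\|=1$. The parameter count $d(d+m)$ exceeds the number of equations $dm$ by $d^2$, so at a transverse (regular) solution, the implicit function theorem implies that any norm close to $\nm$ also admits a $K_{d,m}$-configuration. Hence the subset of norms with a \emph{transverse} $K_{d,m}$-copy is open.

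\textbf{Density.} Given a norm $\nm_0$ and $\epsilon>0$, I aim to produce a norm $\nm'$ within $\epsilon$ of $\nm_0$ with a transverse $K_{d,m}$-copy. By \cite[Lemma 5.4]{ABS23}, $\nm_0$ already has a copy of $K_{d-1,\infty}$ in its unit distance graph, with $d-1$ centers $p_1,\ldots,p_{d-1}$ and infinitely many co-neighbors lying on the $1$-dimensional curve $C=\bigcap_{i<d}(p_i+S_0)$, where $S_0$ is the unit sphere of $\nm_0$. I would choose an additional center $p_d$ so that the continuous function $q\mapsto\|p_d-q\|$ (computed under $\nm_0$) takes values within $\epsilon$ of $1$ on an open arc of $C$, and extract $m$ points $q_1,\ldots,q_m$ from this arc in general position, so that the $2dm$ vectors $\pm(p_i-q_j)$ are pairwise distinct. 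Finally, I modify $\nm_0$ in small disjoint neighborhoods of $\pm(p_d-q_j)$ to push its value at each of these points to exactly $1$, leaving the norm unchanged near $\pm(p_i-q_j)$ for $i<d$. The modification takes the form of small symmetric convex-preserving bumps of the unit ball; the configuration $(p_1,\ldots,p_d,q_1,\ldots,q_m)$ then lies in the unit distance graph of $\nm'$.

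\textbf{Main obstacle.} The chief difficulty is to carry out the local modifications so that they simultaneously preserve the convexity and symmetry of the unit ball, remain small in the chosen topology on norms, and leave behind a transverse $K_{d,m}$-configuration so that the openness argument closes the loop. Overcoming this will require careful generic choices of $p_d$, of the $q_j$, and of the precise shape of the convex bumps.
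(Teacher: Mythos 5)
Your reduction — produce, for each $m$, a dense open set of norms whose unit distance graph contains a $K_{d,m}$, then intersect over $m$ — is exactly the framework the paper uses (\cref{thm:bipartite}). The two ingredients, however, each contain a gap that the paper's actual construction is specifically designed to avoid.

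For openness you invoke the implicit function theorem at a ``transverse'' $K_{d,m}$-configuration. But the map $x\mapsto\|x\|_B-1$ is only continuous for a general unit ball $B$; norms need not be differentiable anywhere near the $dm$ relevant unit vectors, so neither transversality nor the implicit function theorem is available. The paper works around this with Brouwer degree: the degree of $\Phi_B(x)=(\|x-q_j\|_B-1)_{j=0}^{d-1}$ on a small neighborhood is invariant under $C^0$-small perturbations of $\Phi$ whenever it is nonzero (property (\ref{item:stability})), and they arrange for the model ball $B_0$ to be \emph{smooth} near the intersection points so that the degree can actually be computed from the Jacobian (property (\ref{item:determinant})).

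For density, your plan — take the $K_{d-1,\infty}$ from \cite[Lemma 5.4]{ABS23}, choose a $d$-th center $p_d$ and right-vertices $q_j$ with $\|p_d-q_j\|$ close to $1$, then nudge $\partial B_0$ near each $\pm(p_d-q_j)$ to hit $1$ exactly — runs into the obstruction you yourself name, and it is not a technicality. A local outward modification preserving convexity is a convex hull with a nearby point, which makes $\pm(p_d-q_j)$ a conical vertex; a local inward modification is a halfspace cut, which creates a flat face. Either way the new ball is non-smooth precisely at the points where your openness step requires transversality, so the two halves of the argument do not fit together. One can instead make $C^\infty$ local modifications via the support function, but then you must still verify that the resulting configuration has nonzero degree (equivalently, that the normals to the $d$ spheres at each $q_j$ are linearly independent), and nothing in your construction forces this.

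The paper's density argument sidesteps these issues by not attempting to hit pre-chosen points $q_j$ at all. It builds an explicit smooth local model, the graph of $\rho(x)=\|x\|_2^2+h\chi(x)\cos(\pi n x_1)$, whose oscillatory term guarantees that $d$ translates meet transversally in $2n$ points with an explicitly nonvanishing Jacobian; density is then obtained by slicing an arbitrary ball and gluing in a small rescaled copy $\Sigma_\delta$ of this model. This ``perturb to contain a prescribed local model'' strategy settles convexity, smoothness, and transversality in one stroke. Your proposal identifies the right target but, as written, lacks the construction needed to reach it.
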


This result can be used to give an alternative proof of \cref{thm:main} for most norms in dimension $d\geq 3$. We will discuss this more at the end of \cref{sec:kdm}.

\subsection*{Notation}
We write $\nm_2$ for the standard Euclidean norm on $\R^d$ and $e_1,\ldots,e_d\in\R^d$ for the standard orthonormal basis of $\R^d$. We write $\mathbb S^{d-1}$ for the standard unit sphere in $\R^d$.

We use standard additive combinatorics notation. Given two sets $X,Y$, write $X+Y=\{x+y:x\in X,y\in Y\}$ for their sumset. For a scalar $x$ and a set of vectors $Y$, write $x\cdot Y=\{xy:y\in Y\}$. Similarly, for a vector $y$ and a set of scalars $X$, write $X\cdot y=\{xy:x\in X\}$. We write $[n]=\{1,\ldots,n\}$.

\subsection*{Acknowledgments}
We thank the anonymous referees for a very careful reading of this paper.

\section{Warm-up construction: \texorpdfstring{$d=2$}{d=2}}

In this section we give a sketch of the proof of \cref{thm:main} in dimension $d=2$. Let $\nm$ be any norm on $\R^2$ and let $B=\{x\in\R^2:\|x\|\leq 1\}$ be its unit ball. One can easily check that $B$ is a compact, convex subset of $\R^2$ which is symmetric about $0$ and contains a neighborhood of $0$.

Let $h=\sup_{(x,y)\in B}y>0$ be the height of $B$ above the $x$-axis. Then for each $t\in[0,h]$, the horizontal line $\ell_t:=\{(x,t):x\in\R\}$ intersects $B$ in a line segment. Define $\lambda\colon[0,h]\to\R_{\geqslant 0}$ by setting $\lambda(t)$ to be the length of $\ell_t\cap B$. It is not hard to check that $\lambda$ is continuous and takes every value in the interval $[0,w]$ where $w=\lambda(0)$.

Define $t_1,t_2,\ldots,t_m\in(0,h)$ so that $\lambda(t_i)=\tfrac{i}{m+1}w$. Let $p_i,q_i\in\partial B$ be the left- and right-endpoint of $\ell_{t_i}\cap B$, respectively. Then, defining $v=we_1/(m+1)$, we have $q_i=p_i+iv$. 

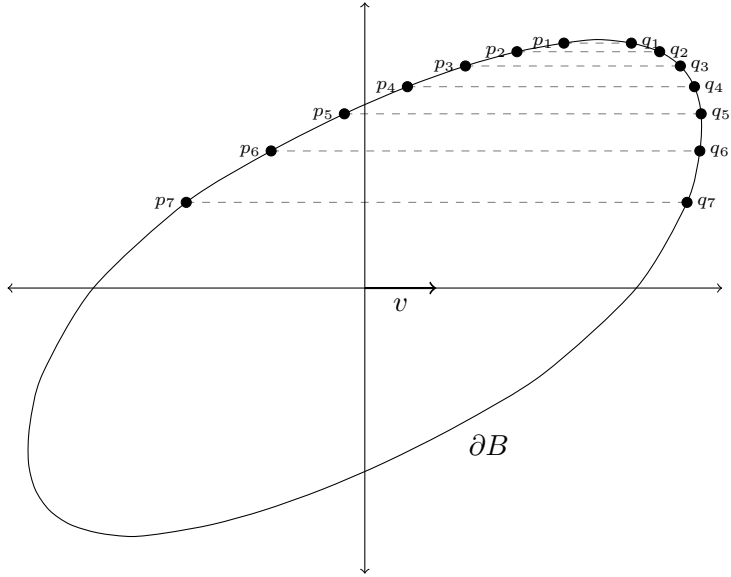
\begin{figure}\label{fig:2d}
\begin{tikzpicture}[scale=0.95]
\tikzstyle{every node}=[font=\tiny]
\def\min{-5};
\def\max{5};
\def\ta{3.43};
\def\tap{2.786};
\def\taq{3.730};
\def\tb{3.31};
\def\tbp{2.129};
\def\tbq{4.126};
\def\tc{3.11};
\def\tcp{1.408};
\def\tcq{4.416};
\def\td{2.82};
\def\tdp{0.596};
\def\tdq{4.613};
\def\te{2.44};
\def\tep{-0.286};
\def\teq{4.707};
\def\tf{1.92};
\def\tfp{-1.311};
\def\tfq{4.688};
\def\tg{1.20};
\def\tgp{-2.499};
\def\tgq{4.509};
\draw[gray, dashed] (\tap,\ta)--(\taq,\ta);
\draw[gray, dashed] (\tbp,\tb)--(\tbq,\tb);
\draw[gray, dashed] (\tcp,\tc)--(\tcq,\tc);
\draw[gray, dashed] (\tdp,\td)--(\tdq,\td);
\draw[gray, dashed] (\tep,\te)--(\teq,\te);
\draw[gray, dashed] (\tfp,\tf)--(\tfq,\tf);
\draw[gray, dashed] (\tgp,\tg)--(\tgq,\tg);
\filldraw[black] (\tap,\ta) circle (2pt) [anchor=east] node {$p_1$};
\filldraw[black] (\taq,\ta) circle (2pt) [anchor=west] node {$q_1$};
\filldraw[black] (\tbp,\tb) circle (2pt) [anchor=east] node {$p_2$};
\filldraw[black] (\tbq,\tb) circle (2pt) [anchor=west] node {$q_2$};
\filldraw[black] (\tcp,\tc) circle (2pt) [anchor=east] node {$p_3$};
\filldraw[black] (\tcq,\tc) circle (2pt) [anchor=west] node {$q_3$};
\filldraw[black] (\tdp,\td) circle (2pt) [anchor=east] node {$p_4$};
\filldraw[black] (\tdq,\td) circle (2pt) [anchor=west] node {$q_4$};
\filldraw[black] (\tep,\te) circle (2pt) [anchor=east] node {$p_5$};
\filldraw[black] (\teq,\te) circle (2pt) [anchor=west] node {$q_5$};
\filldraw[black] (\tfp,\tf) circle (2pt) [anchor=east] node {$p_6$};
\filldraw[black] (\tfq,\tf) circle (2pt) [anchor=west] node {$q_6$};
\filldraw[black] (\tgp,\tg) circle (2pt) [anchor=east] node {$p_7$};
\filldraw[black] (\tgq,\tg) circle (2pt) [anchor=west] node {$q_7$};
\draw (-\tfp,-\tf) [anchor=north west] node {\normalsize $\partial B$};  
\draw[<->] (\min,0)--(\max,0);
\draw[<->] (0,-4)--(0,4);
\draw plot [smooth cycle] coordinates {(-3.8,0) (\tgp,\tg) (\tfp, \tf) (\tep, \te) (\tdp,\td) (\tcp,\tc) (\tbp,\tb) (\tap, \ta) (3.26, 3.48) (\taq,\ta) (\tbq,\tb) (\tcq,\tc) (\tdq,\td) (\teq, \te) (\tfq,\tf) (\tgq,\tg) (3.8,0) (-\tgp,-\tg) (-\tfp, -\tf) (-\tep, -\te) (-\tdp,-\td) (-\tcp,-\tc) (-\tbp,-\tb) (-\tap,- \ta) (-3.26, -3.48) (-\taq,-\ta) (-\tbq,-\tb) (-\tcq,-\tc) (-\tdq,-\td) (-\teq,-\te) (-\tfq,-\tf) (-\tgq,-\tg)};
\draw[thick, ->] (0,0)--(1,0) node[midway, below] {\normalsize $v$};
\end{tikzpicture}
\caption{Selection of the points $p_i$ and $q_i$.}
\end{figure}

Now define the set
\[S=\{a_0v+a_1p_1+\cdots+a_mp_m:a_0\in\{0,1,\ldots,m^2-1\}\text{ and }a_i\in\{0,1\}\text{ for all }i\in[m]\}.\]
This is a set of at most $|S|\leq 2^mm^2$ points. For now, suppose that $|S|=2^mm^2$.

Note that $(q,q+p_i)\in S^2$ is a pair of points at distance 1 for each $q$ that comes from a tuple $(a_0,\ldots,a_m)$ with $a_i=0$. The same is true of $(q,q+p_i+iv)\in S^2$ for each $q$ with $a_i=0$ and $a_0<m^2-i$. Under the assumption that $|S|=2^mm^2$, there are at least $2^{m-1}m^2=|S|/2$ pairs of points separated by the vector $p_i$ for each $i\in[m]$ and at least $2^{m-1}(m^2-i)\geq(1-1/m)|S|/2$ pairs separated by $p_i+iv$ for each $i\in[m]$. This sums up to at least $(m-1/2)|S|\approx |S|\log_2|S|$ unit distances.

We will show later (see \cref{thm:unit-distance-grid-graph-general}) that collisions among elements of $S$ only help us; in other words, even if $|S| < 2^mm^2$, the set still spans at least $(m-1/2)|S|$ unit distances. This construction works for each $m\geq 1$. Taking the union of these constructions for various values of $m$ allows one to produce a set of $n$ points for any $n$ with at least $(1-o(1))n\log_2 n$ unit distances.

In the rest of the paper, we will fill in the details of this sketch and generalize it to all dimensions. In dimensions $d\geq 3$, it will take more work to find the points $p_1,\ldots,p_m$; we will need to use some topological dimension theory to perform this step.

Our argument shares several ideas with Alon, Buci\'c, and Sauermann's proof of \cref{thm:abs-lower}. Both proofs use the Hurewicz dimension lowering theorem as part of the argument to find the points $p_1,\ldots, p_m$, though the additional properties of our point set require a more involved argument. Once these points are found, both proofs use them to construct generalized arithmetic progressions (GAPs) that span many unit distances. In the Alon--Buci\'c--Sauermann argument, they are able to guarantee that the GAP is proper, while we cannot do this and instead show how to deal with non-proper GAPs. The main innovation in this paper is that the specific structure of our point set $p_1,\ldots,p_m$ produces a GAP which is even denser in the unit distance graph.

\section{Preliminaries}
\label{sec:prelim}
We call a norm on $\R^d$ a \emph{$d$-norm}. There is a one-to-one correspondence between $d$-norms $\nm$ and their unit balls $\{x\in\R^d:\|x\|\leq 1\}$.

\begin{defn}
A set $B\subset\R^d$ is a \emph{unit ball} if $B$ is compact, convex, symmetric about 0, and contains a neighborhood of 0. Given a unit ball $B$, define the norm $\nm_B$ by $\|x\|_B=r$ where $r\geq 0$ is the smallest non-negative real such that $x\in r\cdot B$. This is the norm whose unit ball is $B$. A unit ball $B$ is \emph{strictly convex} if $\partial B$ does not contain a line segment of positive length.
\end{defn}

We record the property that the boundary $\partial B$ of a unit ball $B\subset\R^d$ is homeomorphic to $\Sp^{d-1}$. Indeed, one such homeomorphism $\partial B\to\Sp^{d-1}$ is given explicitly by $x\mapsto x/\|x\|_2$.

Write $\cB_d$ for the set of unit balls in $\R^d$. We consider $\cB_d$ as a metric space under the \emph{Hausdorff distance}
\[d_H(A,B):=\max\set{\sup_{a\in A}\inf_{b\in B}\|a-b\|_2,\sup_{b\in B}\inf_{a\in A}\|a-b\|_2}.\]

\begin{defn}
A set $\cA\subseteq\cB_d$ is \emph{comeagre} if it can be written as a countable intersection of sets, each of which has dense interior. We say that a property is true of \emph{most norms} if there exists a comeagre set $\cA\subseteq\cB_d$ such that the property holds for all $\nm_B$ with $B\in\cA$.
\end{defn}

By the Baire category theorem, $\cB_d$ is a Baire space (this follows from, e.g., \cite[Theorem 6.4]{Gru07}) meaning that every comeagre set is dense.  To prove \cref{thm:bipartite-main}, we exploit some counterintuitive properties of the definition of comeagre and the Hausdorff distance. We will prove the following.

\begin{prop}
\label{thm:bipartite}
For each $d\geq 3$ and $m\geq 1$, there exists a dense open set $\cA_m\subseteq \cB_d$ of unit balls which contain a $K_{d,m}$ in their unit distance graph.
\end{prop}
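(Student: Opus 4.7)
My plan is to define $\cA_m$ as the topological interior (in the Hausdorff metric on $\cB_d$) of the set of unit balls whose unit distance graph contains $K_{d,m}$. Then $\cA_m$ is open by construction and every element has $K_{d,m}$, so the real work is to prove density. Given any $B_0 \in \cB_d$ and $\epsilon > 0$, I will produce $B_1$ with $d_H(B_0, B_1) < \epsilon$ such that every sufficiently close Hausdorff-perturbation of $B_1$ still contains $K_{d,m}$.

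The construction of $B_1$ proceeds in three steps. First, smooth $B_0$ slightly (e.g., via Minkowski sum with a small strictly convex smooth body, or support-function mollification) to obtain $\tilde B \in \cB_d$ with strictly convex $C^\infty$ boundary and $d_H(B_0, \tilde B) < \epsilon/2$. Second, fix $v_0 \in \partial \tilde B$, rotate and rescale so $v_0 = n_0 = e_d$, and locally write $\partial \tilde B$ as the graph $x_d = \psi_0(y)$ of a strictly concave smooth function on a neighborhood $U \subset \R^{d-1}$ of $0$. Third, on a smaller compact region $U' \Subset U$, replace $\psi_0$ by
\[
\psi_1(y) := \psi_0(y) - \eta + \epsilon' \phi(y),
\]
with $\eta > 0$ chosen so $\psi_1$ matches $\psi_0$ smoothly along $\partial U'$, $\phi$ a smooth generic bump supported in the interior of $U'$, and $\epsilon'$ small; apply the corresponding modification symmetrically about $0$. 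For small $\eta, \epsilon'$, $B_1$ is a valid unit ball with $d_H(\tilde B, B_1) < \epsilon/2$ and $\partial B_1$ smooth on the modified ``curved face''.

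Next I argue $B_1 \in \cA_m$ by exhibiting a $K_{d,m}$ configuration $(a_0, b_0)$ on the curved face with each $b_{0,j} - a_{0,i}$ a smooth point of $\partial B_1$ and surjective Jacobian of
\[
F_{B_1} \colon (\R^d)^{d+m} \to \R^{dm}, \qquad F_{B_1}(a,b)_{i,j} := \|b_j - a_i\|_{B_1} - 1.
\]
Writing $a_i = (\alpha_i, u_i)$ and $b_j = (\beta_j, v_j)$ with $\alpha_i, \beta_j \in \R^{d-1}$ and $u_i, v_j \in \R$, the constraints on the curved face reduce to $v_j - u_i = \psi_1(\beta_j - \alpha_i)$, which forces $\beta_1, \dots, \beta_m$ to lie on a common level set of each difference function $\Delta_i(\beta) := \psi_1(\beta - \alpha_1) - \psi_1(\beta - \alpha_i)$ for $i = 2, \dots, d$. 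For $d \geq 3$ these difference functions map $\R^{d-1} \to \R$, so their common level sets are generically $(d-2)$-dimensional manifolds, on which $m$ distinct $\beta_j$'s can be placed. At the flat reference $\epsilon' = 0$ the Jacobian has rank only $d+m-1$, but for generic $\phi$ and $\epsilon' > 0$ the Jacobian rows acquire components $\pm \epsilon' \nabla\phi(\beta_j - \alpha_i)$ that fill the $(d-1)(m-1)$-dimensional cokernel of the flat Jacobian, raising the rank to $dm$. Given such a nondegenerate $(a_0, b_0)$, I conclude via Brouwer: take a smooth local right-inverse $\sigma \colon \R^{dm} \to (\R^d)^{d+m}$ of $DF_{B_1}|_{(a_0, b_0)}$; for any $B'$ with $d_H(B', B_1)$ sufficiently small, the map $y \mapsto -(F_{B'} - F_{B_1})(\sigma(y))$ is a continuous self-map of a small closed ball in $\R^{dm}$ into itself, and Brouwer yields $y^*$ with $F_{B'}(\sigma(y^*)) = 0$, giving a $K_{d,m}$ witness for $B'$.

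The main obstacle is the transversality argument in the previous paragraph: one must simultaneously guarantee (i) existence of persistent $K_{d,m}$ configurations on the curved face (via the level-set placement of $\beta_j$'s) and (ii) surjectivity of the Jacobian at a generic such configuration (via the gradient rank computation) for generic smooth $\phi$. This is where $d \geq 3$ enters essentially: for $d = 2$ the relevant level sets of $\Delta_i$ are $0$-dimensional, so for generic $\phi$ at most one $\beta_j$ fits on each, preventing $K_{2,m}$ for $m \geq 3$ on the curved face---the same level-set geometry that makes strictly convex planar norms $K_{2,3}$-free.
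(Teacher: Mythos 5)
The framework you propose---locally smooth the ball, perturb the boundary by a bump, exhibit a transversal $K_{d,m}$ configuration, and use a degree/Brouwer argument to make it persist under Hausdorff perturbations---is the same framework the paper uses. The final Brouwer step and the observation that $d\geq 3$ is needed (precisely for the $K_{2,3}$-free reason you give) are both correct. But the heart of the matter, producing an actual transversal $K_{d,m}$ configuration, has a genuine gap, and it is exactly the step the paper has to do by hand with a very specific construction.

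The difficulty is a chicken-and-egg problem that ``generic $\phi$'' does not resolve. First, a dimension-count correction: the common level set $\bigcap_{i=2}^d \Delta_i^{-1}(c_i)$ of your $d-1$ difference functions $\Delta_i\colon\R^{d-1}\to\R$ is generically $0$-dimensional, not $(d-2)$-dimensional; it only becomes positive-dimensional if the $\alpha_i-\alpha_1$ span a proper subspace of $\R^{d-1}$ (e.g.\ for the quadratic reference $\psi_0=|y|^2$, distinctness of the $\beta_j$'s forces $(\beta_j-\beta_1)\perp(\alpha_i-\alpha_1)$, which requires degenerate $\alpha$'s). So at $\epsilon'=0$ you must already sit at a degenerate translate configuration, where the intersection of the $d$ translates contains a curve of solutions. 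Now turn on a \emph{generic} bump $\epsilon'\phi$. The perturbed constraint $v_j-u_i=\psi_0(\beta_j-\alpha_i)+\epsilon'\phi(\beta_j-\alpha_i)$ is no longer satisfied by your flat family unless $\phi$ happens to agree across all the points $\beta_j-\alpha_i$, which a generic $\phi$ will not do. Since the flat Jacobian has nontrivial cokernel, the implicit function theorem does not let you continue the solution to $\epsilon'>0$, and there is no a priori reason new solutions with $m$ \emph{distinct} $b_j$'s appear near the old one. Saying the Jacobian rows ``acquire components that fill the cokernel'' is only meaningful \emph{at a solution of the perturbed system}, and you have not produced one.

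The paper's construction resolves exactly this. The bump is $h\chi(x)\cos(\pi n x_1)$, which depends only on $x_1$ (where $\chi=1$) and is periodic, and the translate centers $p_j$ are chosen in $\{0\}\times\R^{d-2}$ with $2<\|p_j\|_2<3$. These two choices interlock: the degenerate translate directions lie in the $x_2,\ldots,x_{d-1}$-plane, so the intersection curve of the $d$ translated surfaces is the parabola $x_2=\cdots=x_{d-1}=0$, $x_d=x_1^2$; the bump is supported where $\chi>0$ and vanishes along this curve exactly at $x_1=(2k+1)/(2n)$, which gives $2n$ surviving intersection points; and the nonzero derivative $\mp h\pi n$ of the cosine there supplies the extra component $\nu_0-\nu_j$ that makes the $d$ normals linearly independent, so each of the $2n$ intersections is transversal. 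This is the content of \cref{thm:local-model-stable}, after which the paper's degree argument (for the square map $\Phi_{B_0}\colon\R^d\to\R^d$ obtained by fixing the $d$ left vertices and varying the single right vertex) is essentially the same as yours. So your proposal is missing the key idea: a generic bump does not create multi-point transversal intersections of $d$ translates of a convex hypersurface, and you need to \emph{engineer} a bump, adapted to the degenerate translate directions, for which the intersection points and their transversality can be verified explicitly.
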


This result implies \cref{thm:bipartite-main}, since the set $\bigcap_{m\geq 1}\cA_m$ is a comeagre set of unit balls which contain a $K_{d,m}$ in their unit distance graph for all $m\geq 1$ simultaneously.

\section{More unit distances in all dimensions}

For $d\geq 2$, let $B\in\cB_d$ be a strictly convex unit ball.
For a nonzero vector $w\in\R^d$ and $x\in\partial B$, we say that $w$ is \textit{tangent} to $B$ at $x$ if the line $x+\spn\{w\}$ intersects $B$ only at $x$. Define $\phi_w\colon\partial B\to\R$ so that $\phi_w(x)=0$ if $w$ is tangent to $B$ at $x$; otherwise $\phi_w(x)$ is the unique nonzero scalar for which $x-\phi_w(x)w\in\partial B$. We know that $B\cap(x+\spn\{w\})$ is an interval; by the strict convexity of $B$, the points inside this interval do not lie in $\partial B$, so $\phi_w$ is well-defined.

We will need the following properties of the function $\phi_w$.

\begin{lemma}
\label{lem:equator}
For any strictly convex unit ball $B\in\cB_d$ and any non-zero vector $w$, 
\begin{enumerate}
    \item the map $\phi_w\colon\partial B\to\R$ is continuous; and
    \item the set $S_w:=\phi_w^{-1}(0)\subseteq\partial B$ of points $x\in\partial B$ such that $w$ is tangent to $B$ at $x$ is homeomorphic to $\Sp^{d-2}$.
\end{enumerate}
\end{lemma}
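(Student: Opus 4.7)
The plan is to reduce both parts to analysis of the projection $\pi\colon\R^d\to V$ along $\spn\{w\}$, where $V$ is (say) the orthogonal complement of $w$. The image $B':=\pi(B)$ is compact, convex, centrally symmetric, and contains a neighborhood of $0$ in $V\cong\R^{d-1}$, hence is itself a unit ball in $V$; in particular $\partial B'\cong\Sp^{d-2}$. Each fiber $\pi^{-1}(y)\cap B=\{y+tw:t\in[a_-(y),a_+(y)]\}$ is a closed segment, defining a concave function $a_+\colon B'\to\R$, a convex function $a_-\colon B'\to\R$, and a concave non-negative width $g:=a_+-a_-$. Compactness of $B$ makes $a_+$ upper semi-continuous and $a_-$ lower semi-continuous on $B'$. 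The setup observation that strict convexity puts the interior of each fiber into $\int B$ implies that if $x=y+tw\in\partial B$ with $y=\pi(x)$, then $t\in\{a_-(y),a_+(y)\}$, and a case check yields the identity
\[\phi_w(x)=2t-a_+(y)-a_-(y).\]

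The main step is to upgrade the semi-continuity of $a_\pm$ to continuity on all of $B'$, which reduces to showing $g=0$ on $\partial B'$ and $g>0$ on $\int B'$. Strict convexity enters twice here. First, for $y\in\partial B'$, I take a supporting hyperplane $\{\ell\leq c\}$ of $B'$ at $y$ in $V$ and lift it to the supporting hyperplane $\{\ell\circ\pi\leq c\}$ of $B$ in $\R^d$, whose intersection with $B$ contains the entire fiber over $y$ inside $\partial B$; strict convexity then collapses this fiber to a single point, giving $g(y)=0$. Second, $g(0)>0$ because $B$ contains a neighborhood of $0$, and a standard concavity argument (writing any $y\in\int B'$ as a non-trivial convex combination of $0$ and a point $y'\in B'$ slightly past $y$ on the ray from $0$) propagates this to $g>0$ on $\int B'$. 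With these, concavity of $a_+$ gives continuity on $\int B'$, and at $y\in\partial B'$ one has $\limsup a_+(y_n)\leq a_+(y)$ by upper semi-continuity and $\liminf a_+(y_n)\geq\liminf a_-(y_n)\geq a_-(y)=a_+(y)$ by $a_+\geq a_-$, lower semi-continuity of $a_-$, and $a_+=a_-$ on $\partial B'$; symmetrically, $a_-$ is continuous on all of $B'$.

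Part (2) is then immediate: writing $x=y+tw$, the scalar $t$ is a linear, hence continuous, function of $x$, and $a_\pm\circ\pi$ is continuous on $\R^d$, so $\phi_w(x)=2t-a_+(\pi(x))-a_-(\pi(x))$ is continuous on $\partial B$. For part (1), the formula combined with $t\in\{a_-(y),a_+(y)\}$ shows $\phi_w(x)=0$ iff $a_+(\pi(x))=a_-(\pi(x))$, which by the previous paragraph is equivalent to $\pi(x)\in\partial B'$; in that case $x=\pi(x)+a_+(\pi(x))w$ is the unique preimage. Therefore $\pi\colon S_w\to\partial B'$ is a continuous bijection with continuous inverse $y\mapsto y+a_+(y)w$, hence a homeomorphism, and $S_w\cong\partial B'\cong\Sp^{d-2}$. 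I expect the main obstacle to be carefully handling the boundary behavior of $a_\pm$, where the compactness-based semi-continuities must be reconciled with the collapse $a_+=a_-$ on $\partial B'$ coming from strict convexity.
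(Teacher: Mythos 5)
Your proof is correct, and the underlying geometric picture matches the paper's: project $B$ along $w$ to a shadow $B'$ in a transversal hyperplane, use a lifted supporting hyperplane together with strict convexity to show that fibers over $\partial B'$ are singletons, and identify $S_w$ as the preimage in $\partial B$ of $\partial B'$. The genuine difference is in how continuity is established. You prove continuity of the fiber-endpoint functions $a_\pm$ directly via convex analysis: concavity gives interior continuity, while compactness-based one-sided semicontinuity combined with the boundary collapse $a_+=a_-$ on $\partial B'$ handles the boundary. The paper never analyzes $a_\pm$ at all; it instead shows the sets $U^+$ and $U^-$ where $\phi_w$ is positive resp.\ negative are open, so that $S_w$, $S_w\cup U^+$, $S_w\cup U^-$ are compact, and then invokes the fact that a continuous bijection from a compact space to a Hausdorff space is a homeomorphism to obtain continuous inverses for free. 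Your route is more hands-on and yields the clean explicit formula $\phi_w(x)=2t-a_+(\pi(x))-a_-(\pi(x))$; the paper's is slicker point-set topology, at the cost of a separate (short) argument for the openness of $U^\pm$.
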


The set $S_w$ is called a \emph{shadow boundary} of $B$. Portions of this lemma appear in the literature, for example in \cite{Horvath}. For completeness, we give a proof here.

\begin{proof} Write $W=\spn\{w\}$ and let $\pi\colon\R^d \to W^\bot$ denote the orthogonal projection. The image $K:=\pi(B)$ is then a unit ball in $W^\bot \cong \R^{d-1}$.

For each $y \in K$, let $\psi_-(y) = \min \{t \in \R: y + tw \in B \}$ and $\psi_+(y) = \max \{t \in \R: y + tw \in B \}$. We claim that $\psi_+$ is continuous. Indeed, assume $y \in K$ is such that there exist $(y_k)_{k=1}^\infty$ in $K$ with $\lim_{k \to \infty} y_k = y$, but $\psi_+(y_k)$ does not converge to $\psi^+(y)$. By the boundedness of $\psi_+$, we can assume $\lim_{k \to \infty} \psi_+(y_k) = \tilde \psi \neq \psi^+(y)$, after passing to a subsequence. Both $y + \psi_+(y) w$ and $y + \tilde \psi w$ lie in $\partial B$, the latter since $\partial B$ is closed and $y_k + \psi_+(y_k) \in \partial B$. Hence $\tilde \psi < \psi_+(y)$, and $y + \frac12(\psi_+(y) + \tilde \psi) w \in B$. By the strict convexity of $B$, this point does not lie in $\partial B$ (otherwise $\partial B$ would contain the three collinear points) so there exists a small Euclidean ball $J$ around $y + \frac12(\psi_+(y) + \tilde \psi) w$ that is contained in $B$. This, however, implies that $\psi_+(y') \geq \frac12(\psi_+(y) + \tilde \psi)$ for all $y' \in \pi(J)$, contradicting the assumption that $\lim_{k \to \infty} \psi_+(y_k) = \tilde \psi < \frac12(\psi_+(y) + \tilde \psi)$. The continuity of $\psi_-$ follows analogously.

Observe that $|\phi_w(x)| = \psi_+(\pi(x)) - \psi_-(\pi(x))$. Thus, $|\phi_w| = (\psi_+ - \psi_-) \circ \pi$ is continuous. At $x \in \partial B$ with $\phi_w(x) = 0$, this implies $\phi_w$ is continuous as well. Now consider $x \in \partial B$ with $\phi_w(x) \neq 0$. Assume for the sake of contradiction that there exists a sequence $(x_k)_{k=1}^\infty$ with $\lim_{k\to\infty} x_k=x$ such that $\lim_{k \to \infty} \phi_w(x_k) = - \phi_w(x)$. (Any such sequence must have a subsequence with limit in $\{-\phi_w(x),\phi_w(x)\}$ by continuity of $|\phi_w|$). Since $\partial B$ is closed, $x - \phi_w(x)w = \lim_{k \to \infty} (x_k + \phi_k(x_k)w)$ lies in $\partial B$. By construction of $\phi_w$, so does $x + \phi_w(x)w$. But now $\partial B$ contains three collinear points, contradicting the assumption of strict convexity. Hence, $\phi_w$ is continuous at any point $x \in \partial B$ with $\phi_w(x) \neq 0$ as well.

For part (2), we claim that $\pi$ is a bijection between $S_w$ and $\partial K$. Consider $x\in S_w$. By the separating hyperplane theorem (applied to $\int B$ and $x+W$), there exists a supporting hyperplane $H$ to $B$ that contains $x+W$. Then $\pi(H)$ is a supporting hyperplane to $K$ that contains $\pi(x)$. Thus, $\pi(x)\in\partial K$. On the other hand, for each $p\in\partial K$, pick some $x\in \partial B\cap \pi^{-1}(p)$. Then the preimage $B\cap \pi^{-1}(p)$ is the closed interval $(x+W)\cap B$ and also lies in $\partial B$. By the strict convexity of $B$ we conclude that $(x+W)\cap B=\{x\}$. This means that $x\in S_w$, showing that $\pi$ is a bijection between $S_w$ and $\partial K$. 

Both $S_w$ and $\partial K$ are closed and bounded, hence compact, and as subsets of Euclidean spaces they are Hausdorff topological spaces. As continuous bijections between compact Hausdorff spaces are homeomorphisms (see, e.g., \cite[Theorem 26.6]{Mun00}), we conclude that $\pi|_{S_w}\colon S_w \to \partial K$ is a homeomorphism.
\end{proof}

In the next lemma we will use some dimension theory. Throughout the proof, dimension will be the Lebesgue covering dimension, defined in \cite[Definition 1.6.7]{Eng1}. By Urysohn's theorem, this coincides with the notion of small and large inductive dimension (defined in \cite[Definitions 1.1.1 and 1.6.1]{Eng1}) for separable metric spaces \cite[Theorem 1.7.7]{Eng1}. All that we will use are the following facts.

\begin{prop}
\label{thm:dimension}
For nonempty sets $X\subseteq\R^m$ and $Y\subseteq\R^n$,
\begin{enumerate}
    \item $\dim X\in\Z_{\geqslant 0}$;
    \item if $X$ is homeomorphic to $Y$, then $\dim X=\dim Y$;
    \item if $\dim X\geq 1$, then $X$ is infinite;
    \item if $X\subseteq Y$, then $\dim X\leq \dim Y$;
    \item if $X$ is compact and $f\colon X\to Y$ is a continuous map such that $\dim f^{-1}(y)\leq k$ for all $y\in Y$, then $\dim X\leq\dim Y+k$; and
    \item $\dim X\leq m$ with equality if and only if $X$ has nonempty interior.
\end{enumerate}
\end{prop}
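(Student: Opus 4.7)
The plan is to deduce each of the six items from standard results in dimension theory, specifically from Engelking's monograph \cite{Eng1}. Nothing here is novel; the proposition serves as a concise toolbox for the next lemma, and I would present the verification essentially as a reference chase, broken into three groups of increasing difficulty.

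Items (1), (2), (3) are immediate from the definition: covering dimension is defined in terms of open covers, so it takes values in $\Z_{\ge -1}$ with $-1$ reserved for the empty space, and it is clearly a topological invariant; any nonempty finite subset of $\R^m$ carries the discrete topology and therefore has covering dimension $0$, so $\dim X \ge 1$ forces $X$ to be infinite. For (4), monotonicity under arbitrary subspaces is subtle in the purely topological setting, but holds for separable metric spaces (which covers every subset of $\R^n$): one may argue directly via \cite[Theorem 3.1.19]{Eng1}, or pass through the small inductive dimension $\mathrm{ind}$, for which monotonicity is obvious, and then invoke the identification $\dim=\mathrm{ind}$ already cited from \cite[Theorem 1.7.7]{Eng1}.

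The substantive content lives in (5) and (6). Item (5) is exactly the Hurewicz dimension-raising theorem \cite[Theorem 1.12.4]{Eng1}, which asserts that a closed continuous surjection between separable metric spaces cannot increase dimension by more than the supremum of the fiber dimensions; here the hypothesis that $X$ is compact makes $f$ automatically closed, and one may replace $Y$ by the image $f(X)$ without loss of generality. For (6), the bound $\dim X\le m$ follows by combining $\dim \R^m = m$ (e.g., \cite[Theorem 1.8.2]{Eng1}) with (4), while the equivalence $\dim X = m \iff \mathrm{int}\, X \neq \emptyset$ reduces to the classical fact that $(m-1)$-dimensional subsets of $\R^m$ are precisely the nowhere dense sets, itself ultimately a consequence of Brouwer's invariance of domain.

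I expect no real mathematical obstacle — the only difficulty is administrative, namely matching each statement to its cleanest formulation in the literature. If I had to single out one step, it would be (5): the Hurewicz inequality has several competing formulations (open vs.\ closed maps, compact vs.\ separable, with or without surjectivity hypotheses), and some care is required to invoke the version that fits the hypotheses here without extra work.
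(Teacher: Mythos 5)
Your proposal matches the paper's own treatment: the authors likewise dispatch (1)--(3) as definitional consequences, cite Engelking for the subspace monotonicity in (4) (Theorem 1.1.2), for (5) (Theorem 1.12.4), and for (6) (Theorems 1.8.2 and 1.8.10). One small slip worth fixing: Theorem 1.12.4 is the Hurewicz theorem on dimension-\emph{lowering} (not dimension-raising) mappings --- the content you state and the citation are both correct, only the name is off.
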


The first three follow from the definitions while the fourth is \cite[Theorem 1.1.2]{Eng1}. The fifth is the Hurewicz dimension lowering theorem, given as \cite[Theorem 1.12.4]{Eng1}, and the sixth is \cite[Theorems 1.8.2 and 1.8.10]{Eng1}.

\begin{lemma}
\label{thm:phi-arith-prog}
Let $B\in\cB_d$ be a strictly convex unit ball and let $w_1,\ldots,w_{d-1}$ be linearly independent vectors in $\R^d$. Define the map $\Phi\colon\partial B\to\R^{d-1}$ by $\Phi=(\phi_{w_1},\ldots,\phi_{w_{d-1}})$. Then, for each positive integer $m$, there exist distinct vectors $p_1,\ldots,p_m\in\partial B$, a vector $t\in\R^{d-1}$, and a scalar $\lambda\geq 0$ satisfying the following conditions:
\begin{itemize}
    \item the points $p_1,\ldots,p_m$ lie (strictly) on the same side of the hyperplane $\spn\{w_1,\ldots,w_{d-1}\}$;
    \item no coordinate of $t$ is zero; and
    \item $\Phi(p_i)=(1+i\lambda)t$ for each $1\leq i\leq m$.
\end{itemize}
\end{lemma}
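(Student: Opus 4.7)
The plan is to apply the Hurewicz dimension-lowering theorem (\cref{thm:dimension}(5)) to a restriction of $\Phi$ that avoids all the shadow boundaries $S_{w_i}$.

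Fix an open half-space $H^+$ bounded by $H=\spn\{w_1,\ldots,w_{d-1}\}$ and set $U=\partial B\cap H^+$, a nonempty open subset of the compact $(d-1)$-manifold $\partial B$. By \cref{lem:equator}(1), each $S_{w_i}$ is $(d-2)$-dimensional and closed in $\partial B$, so by \cref{thm:dimension}(4) it has empty interior there. Hence
\[U':=U\setminus\bigcup_{i=1}^{d-1}S_{w_i}\]
is the intersection of $U$ with finitely many open dense subsets of $\partial B$, and is therefore open and dense in $U$. On $U'$, every coordinate of $\Phi$ is nonzero by construction. Pick any compact $U''\subseteq U'$ with nonempty interior in $\partial B$, so that $\dim U''=d-1$.

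Now apply \cref{thm:dimension}(5) to the continuous map $\Phi|_{U''}\colon U''\to\R^{d-1}$ (continuity comes from \cref{lem:equator}(2)). One of two cases holds: either (A) every fiber has dimension $0$, and then $d-1=\dim U''\leq \dim\Phi(U'')$ forces $\Phi(U'')$ to have nonempty interior in $\R^{d-1}$ by \cref{thm:dimension}(6); or (B) some fiber $\Phi^{-1}(y^*)\cap U''$ has dimension at least $1$ and is therefore infinite by \cref{thm:dimension}(3). Crucially, because $U''\subseteq U'$, every point of $\Phi(U'')$ (and hence every candidate $t$ below) has no zero coordinate.

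Assemble the required data. In case (A), pick $t$ in the interior of $\Phi(U'')$, choose $\lambda>0$ small enough that $(1+i\lambda)t$ lies in this interior for each $i\in[m]$, and let $p_i\in U''$ be any preimage of $(1+i\lambda)t$; the $p_i$ are distinct since their images are, and they lie in $H^+$. In case (B), set $t=y^*$ and $\lambda=0$, and take $p_1,\ldots,p_m$ to be any $m$ distinct elements of the infinite fiber. The main obstacle is arranging that $t$ avoid every coordinate hyperplane, which is exactly why one excises the shadow boundaries before invoking Hurewicz: applied directly to $U$, case (B) could return an infinite fiber sitting inside some $S_{w_i}$ (yielding a zero-coordinate $t$), and the interior produced in case (A) could meet a coordinate hyperplane. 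Restricting to $U''\subseteq U'$ rules out both pathologies simultaneously.
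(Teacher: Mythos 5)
Your proof is correct and follows essentially the same approach as the paper: both restrict $\Phi$ to a compact subset of $\partial B$ that avoids the shadow boundaries and lies strictly on one side of $\spn\{w_1,\ldots,w_{d-1}\}$, then invoke the Hurewicz dimension-lowering theorem (\cref{thm:dimension}(5)) to get the dichotomy between a positive-dimensional fiber and a full-dimensional image. The only difference is cosmetic: the paper carves out its compact set $V$ using the closed half-space $Z$ and the preimage of a uniform $\epsilon$-tube $Y$ around the coordinate hyperplanes, whereas you excise the shadow boundaries directly and then pick a compact $U''$ inside the resulting open set.
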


\begin{proof} By \cref{lem:equator}(2), $\Phi$ is continuous.

Let $w_d$ be a unit vector orthogonal to $w_1,\ldots,w_{d-1}$. Pick a connected open set $U\subset\partial B$ such that its closure, $\overline{U}$, is disjoint from the union of shadow boundaries $S_{w_1}\cup\cdots\cup S_{w_{d-1}}$ as well as from the hyperplane $w_d^\perp$. To see such a $U$ exists, note that by \cref{lem:equator}(1), each of
\[S_{w_1},S_{w_2},\ldots,S_{w_{d-1}},w_d^\perp\cap\partial B\]
are subsets of $\partial B$ homeomorphic to $\Sp^{d-2}$, while $\partial B$ is homeomorphic to $\Sp^{d-1}$. Clearly, this suffices for such a $U$ to exist.

By definition, no coordinate of any point in $\Phi(\overline{U})$ is zero. By the continuity of $\Phi$ and the compactness of $\overline{U}$, there exists $\epsilon>0$ such that all coordinates of $\Phi(x)$ have magnitude at least $\epsilon$ for all $x\in U$. Pick $\eta>0$ such that $|x\cdot w_d|>\eta$ for all $x\in U$. By the central symmetry of $B$, we may assume that $x\cdot w_d>\eta$ for all $x\in U$.

Define
\[Y=\{(t_1,\ldots,t_{d-1})\in\R^{d-1}: \min(|t_1|,\ldots,|t_{d-1}|)<\epsilon\},\]
an open neighborhood of the union of the coordinate hyperplanes.
Also, define the closed half-space
\[Z=\{x\in\R^d: x\cdot w_d\geq\eta\}.\]
Write $V=(\partial B\setminus\Phi^{-1}(Y))\cap Z$. Since $\Phi$ is continuous, $V$ is compact. We also have $U\subset V$. Since $\partial B$ is homeomorphic to $\Sp^{d-1}$, some subset of $U$ is homeomorphic to a non-empty open set in $\R^{d-1}$. Therefore, by \cref{thm:dimension}(2)(4) (6), $V$ has dimension $d-1$.

By \cref{thm:dimension}(5) applied to $\Phi|_V\colon V\to \R^{d-1}\setminus Y$, one of the following must hold:
\begin{enumerate}[(a)]
    \item some fiber $\Phi^{-1}(t)$ for $t\not\in Y$ has positive dimension when intersected with $Z$, or
    \item the image $\Phi(V)\subset\Phi(\partial B)\setminus Y$ has dimension $d-1$.
\end{enumerate}

In case (a), such a vector $t$ is not on any coordinate hyperplane, since it is not in $Y$. Since $\Phi^{-1}(t)\cap Z$ has positive dimension, by \cref{thm:dimension}(3) it contains infinitely many points, and so we can simply take arbitrary $p_1,\ldots,p_m$ among them, with $\lambda=0$.

We now treat case (b). By \cref{thm:dimension}(6), $\Phi(V)$ contains some open ball $T$ in $\R^{d-1}$. Let $t$ be the center of such a ball; note that $t$ is not on any coordinate hyperplane. Let $\lambda$ be small enough that $(1+m\lambda)t\in T$ and define $t_i=(1+i\lambda)t$ for each $1\leq i\leq m$. Since $t_i\in T$ for each $i$, we can find some $p_i\in\Phi^{-1}(t_i)\cap \partial B\cap Z$. These points lie in $Z$, and so they all lie on the same side of the hyperplane $\spn\{w_1,\ldots,w_{d-1}\}$, as desired.
\end{proof}

To prove \cref{thm:main} we will need a lemma about the number of unit distances spanned by a generalized arithmetic progression (GAP) whose increments are unit vectors. This is easy to compute for proper GAPs but we will show that similar bounds hold in general in terms of the size of the GAP. In the next two lemmas, we write $[a,b]=\{a,a+1,\ldots,b\}$. We say that a set of vectors $u_1,\ldots,u_m$ is \emph{non-overlapping} if the $2m$ vectors $\pm u_1,\ldots,\pm u_m$ are distinct. 

\begin{lemma}
\label{thm:set-plus-ap-strong}
For integers $k\geq c\geq 2$, a vector $x\in \R^d$, and a finite set $X\subset\R^d$ we have the inequality
\[|X+[0,k-c-1]\cdot x|\geq\paren{1-\frac ck}|X+[0,k-1]\cdot x|.\]
\end{lemma}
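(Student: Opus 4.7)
The plan is to introduce the single-variable function $f(m) := |X + [0, m]\cdot x|$ (with the convention $f(-1) = 0$ coming from $X + \emptyset = \emptyset$), show that $f$ is discretely concave on $m \geq -1$, and then read off the claimed bound from secant-slope monotonicity.

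For the concavity step I would analyze the increment
\[f(m) - f(m-1) = |(X + mx) \setminus (X + [0, m-1]\cdot x)| = |X| - |\{y \in X : y + mx \in X + [0, m-1]\cdot x\}|.\]
Unpacking the condition $y + mx = z + jx$ with $z \in X$ and $j \in [0, m-1]$ rewrites the subtracted set as $\{y \in X : (y + [1, m]\cdot x) \cap X \neq \emptyset\}$, which is manifestly nondecreasing in $m$. Hence $f(m) - f(m-1)$ is nonincreasing in $m$, i.e., $f$ is discretely concave.

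Given concavity together with $f(-1) = 0$, the secant slope $f(m)/(m+1)$ from $(-1, 0)$ to $(m, f(m))$ is nonincreasing in $m$. In the nondegenerate case $k > c$, applying this with $m = k - c - 1$ and $m = k - 1$ yields $f(k-c-1)/(k-c) \geq f(k-1)/k$, which rearranges to the desired bound $|X + [0, k-c-1]\cdot x| \geq (1 - c/k)|X + [0, k-1]\cdot x|$. The boundary case $k = c$ is trivial since then $k - c - 1 = -1$ and both sides equal $0$.

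The main obstacle, such as there is one, is finding the right framing: the crucial move is rewriting the collision condition $y + mx \in X + [0, m-1]\cdot x$ in the manifestly monotone form $(y + [1, m]\cdot x) \cap X \neq \emptyset$, which upgrades a trivial set inclusion into a genuine concavity statement and drives the whole argument.
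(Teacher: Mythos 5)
Your proof is correct and takes essentially the same route as the paper: both establish that the increments $|X+[0,m]\cdot x|-|X+[0,m-1]\cdot x|$ are nonincreasing in $m$ (the paper rewrites the increment as $|X\setminus(X+[-m,-1]\cdot x)|$, you use the equivalent reformulation via $(y+[1,m]\cdot x)\cap X\neq\emptyset$), and then deduce the averaging inequality. Your secant-slope phrasing and the paper's direct comparison of partial sums of a nonincreasing sequence are two wordings of the same final step.
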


\begin{proof}
Define 
\[\Delta_i=|X+[0,i]\cdot x|-|X+[0,i-1]\cdot x|\]
for $i\geq 0$. (Note that $\Delta_0=|X|$.)

We claim that $\Delta_0\geq \Delta_1\geq\Delta_2\geq\Delta_3\geq\cdots$. This is because
\begin{align*}
\Delta_i
&=|(X+[0,i]\cdot x)\setminus(X+[0,i-1]\cdot x)|\\
&=|(X+\{ix\})\setminus(X+[0,i-1]\cdot x)|\\
&=|X\setminus(X+[-i,-1]\cdot x)|.
\end{align*}
Clearly the sets on the final line decrease in size as $i$ increases. Finally we conclude that
\begin{align*}
|X+[0,k-c-1]\cdot x|
&= \Delta_0+\Delta_1+\cdots+\Delta_{k-c-1}\\
&\geq \paren{1-\frac ck}(\Delta_0+\Delta_1+\cdots+\Delta_{k-1})\\
&=\paren{1-\frac ck}|X+[0,k-1]\cdot x|.\qedhere
\end{align*}
\end{proof}

\begin{lemma}
\label{thm:unit-distance-grid-graph-general}
Let $v_1,\ldots,v_m\in\R^d$ be vectors and let $k_1,\ldots,k_m\geq 2$ be integers. Suppose $U\subseteq[0,k_1]\times\cdots\times[0,k_m]$ is such that
\[\{c_1v_1+\cdots+c_mv_m\colon (c_1,\ldots,c_m)\in U\}\]
is a non-overlapping set of $|U|$ unit vectors. Define \[S=\{a_1v_1+\cdots+a_mv_m:a_i\in[0,k_i-1]\text{ for }i\in[m]\}.\] Then $S$ spans at least
\[|S|\cdot\sum_{c\in U}\prod_{i=1}^m\left(1-\frac{c_i}{k_i}\right)\]
unit distances.
\end{lemma}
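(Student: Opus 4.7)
The plan is, for each $c\in U$, to count ordered pairs $(s,s')\in S\times S$ with $s'-s=u_c:=c_1v_1+\cdots+c_mv_m$, and then sum over $c\in U$. The number of such ordered pairs equals $|S\cap(S-u_c)|$. The non-overlapping hypothesis ensures that for distinct $c,c'\in U$ we have $u_c\neq\pm u_{c'}$, so the unordered unit-distance pairs coming from different $c$'s are disjoint; and since $u_c$ is a unit vector, $u_c\neq -u_c$, so each unordered pair with difference $\pm u_c$ corresponds to exactly one ordered pair with difference $u_c$. Thus the number of unit distances in $S$ is at least $\sum_{c\in U}|S\cap(S-u_c)|$.

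To lower-bound $|S\cap(S-u_c)|$, I would introduce the shrunken set
\[S^{(c)}:=\{a_1v_1+\cdots+a_mv_m:a_i\in[0,k_i-1-c_i]\text{ for each }i\}.\]
For any $s\in S^{(c)}$, both $s\in S$ and $s+u_c\in S$ (the coefficients of $s+u_c$ lie in $[c_i,k_i-1]$), so $S^{(c)}\subseteq S\cap(S-u_c)$. The heart of the argument is then the bound
\[|S^{(c)}|\geq\prod_{i=1}^m\paren{1-\frac{c_i}{k_i}}|S|,\]
which I would obtain by iteratively applying \cref{thm:set-plus-ap-strong} one coordinate at a time. Writing both $S$ and $S^{(c)}$ as chains of sumsets, the $j$-th reduction step compares $X_j+[0,k_j-1]\cdot v_j$ with $X_j+[0,k_j-1-c_j]\cdot v_j$, where $X_j$ is the partial sumset over the other coordinates, with the first $j-1$ of them already shrunk and the last $m-j$ not yet shrunk. \cref{thm:set-plus-ap-strong} supplies a factor of $(1-c_j/k_j)$ at each step, and the product telescopes.

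The main point requiring care is that \cref{thm:set-plus-ap-strong} is stated for $c\geq 2$, while we may encounter $c_j\in\{0,1\}$. For $c_j=0$ the reduction step is an equality, and for $c_j=1$ the same averaging argument from its proof applies verbatim (the dropped term $\Delta_{k_j-1}$ is the smallest in a nonincreasing sequence, so it is at most $1/k_j$ of the total). The degenerate case $c_j=k_j$ forces $S^{(c)}=\emptyset$ and the corresponding product factor is $0$, so the bound holds trivially. With these observations, the assembly of the pieces above completes the proof; the combinatorial essence is just the non-overlapping/disjointness trick together with the sumset-shrinking lemma, and I do not expect any further obstacle.
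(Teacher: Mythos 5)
Your proof is correct and takes essentially the same approach as the paper: for each $c\in U$ you count pairs $(s,s+u_c)$ via the shrunken grid $S^{(c)}$ (the paper's $\Psi(A_c)$), sum over $c\in U$ using the non-overlapping hypothesis, and apply \cref{thm:set-plus-ap-strong} one coordinate at a time. Your attention to the cases $c_j\in\{0,1,k_j\}$ is a small but genuine improvement in precision, since the paper invokes \cref{thm:set-plus-ap-strong} with $c_i\in\{0,1\}$ despite stating it only for $c\geq 2$; as you note, the averaging argument in its proof goes through unchanged in those cases.
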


\begin{proof}
Set $A=\prod_{i=1}^m [0,k_i-1]\subset\Z^m$. Then define $\Psi\colon\Z^m\to\R^d$ by $\Psi(a)=a_1v_1+\cdots+a_mv_m$ where $a_i$ denotes the $i$th component of $a$. For each $c\in U$, define $A_c=\prod_{i=1}^m[0,k_i-c_i-1]$.

Note that $|S|=|\Psi(A)|$ and $S$ spans at least $|\Psi(A_c)|$ unit distances in the direction $\Psi(c)$. The latter is because for each $x\in\Psi(A_c)$, there exists $a\in A_c$ such that $\Psi(a)=x$. Then $(\Psi(a),\Psi(a+c))=(x,x+\Psi(c))$ spans a unit distance in the direction $\Psi(c)$. Since $\{\Psi(c):c\in U\}$ is non-overlapping, these unit distances are distinct, so $S$ spans at least $\sum_{c\in U}|\Psi(A_c)|$ unit distances. Now by $m$ applications of \cref{thm:set-plus-ap-strong},
\[\frac{|\Psi(A_c)|}{|\Psi(A)|}\geq \prod_{i=1}^m\paren{1-\frac{c_i}{k_i}},\]
implying the desired result.
\end{proof}

Combining the previous results in this section, we find GAPs that span many unit distances. For technical reasons, we need to construct a nested sequence of GAPs $S_1\subseteq S_2\subseteq\cdots\subseteq S_m$.

\begin{prop}
\label{thm:many-unit-distances-sparse-n}
Let $B\in\cB_d$ be a strictly convex unit ball. For each $m\geq 1$, there exist vectors $v_1,\ldots,v_{m+2d-2}$ with the following property. For $\ell\in[m]$, define the sets $S_\ell\subset\R^d$ by
\[S_\ell=\left\{a_1v_1+\cdots+a_{m+2d-2}v_{m+2d-2}:a_i\in[0,k_i-1]\text{ for }i\in[m+2d-2]\right\}\]
where $k_1=\cdots=k_\ell=2$ and $k_{\ell+1}=\cdots=k_{m}=1$ and $k_{m+1}=\cdots=k_{m+d-1}=\ell$ and $k_{m+d}=\cdots=k_{m+2d-2}=\ell^2$. Then $|S_\ell|\leq 2^\ell\ell^{3(d-1)}$ and $S_\ell$ spans at least $d(\ell-2)|S_\ell|/2$ unit distances under $\nm_B$.
\end{prop}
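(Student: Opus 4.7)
The plan is to imitate the two-dimensional warm-up of Section 2, using \cref{thm:phi-arith-prog} to produce the higher-dimensional analogue of the points $p_i$. I would fix linearly independent $w_1,\ldots,w_{d-1}\in\R^d$ and apply \cref{thm:phi-arith-prog} to obtain distinct $p_1,\ldots,p_m\in\partial B$ lying strictly on one side of $\spn\{w_1,\ldots,w_{d-1}\}$, together with $t\in\R^{d-1}$ with no zero coordinate and $\lambda\geq 0$ such that $\Phi(p_i)=(1+i\lambda)t$. By the definition of $\phi_{w_j}$, the point $q_{i,j}:=p_i-(1+i\lambda)t_jw_j$ lies in $\partial B$ for every $i\in[m]$ and $j\in[d-1]$, so the collection $\{p_i\}\cup\{q_{i,j}\}$ furnishes $md$ unit vectors for $\nm_B$.

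Next I would feed these into \cref{thm:unit-distance-grid-graph-general} with generators $v_i=p_i$ for $i\in[m]$, $v_{m+j}=-t_jw_j$ for $j\in[d-1]$, and $v_{m+d-1+j}=-\lambda t_jw_j$ for $j\in[d-1]$. Then $p_i$ is encoded by the coefficient tuple with a single $1$ in position $i$, while $q_{i,j}$ corresponds to the tuple with $c_i=1$, $c_{m+j}=1$, $c_{m+d-1+j}=i$, and zeros elsewhere; every entry lies within the prescribed ranges $k_i=2$, $k_{m+j}=m$, $k_{m+d-1+j}=m^2$, and the bound $|S|\leq\prod_\ell k_\ell=2^mm^{3(d-1)}$ is immediate. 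Summing $\prod_\ell(1-c_\ell/k_\ell)$ over these $md$ tuples yields
\[\sum_{i=1}^m\tfrac12+\sum_{i=1}^m\sum_{j=1}^{d-1}\tfrac12\paren{1-\tfrac1m}\paren{1-\tfrac{i}{m^2}}=\frac{dm}{2}-\frac{3(d-1)}{4}+\frac{d-1}{4m^2},\]
which exceeds $d(m-2)/2$ for every $d\geq 2$ and $m\geq 1$, so \cref{thm:unit-distance-grid-graph-general} delivers the desired lower bound on the number of unit distances in $S$.

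The main obstacle is verifying the non-overlapping hypothesis of \cref{thm:unit-distance-grid-graph-general}: the $md$ unit vectors above together with their negatives must be pairwise distinct. Since $q_{i,j}-p_i\in\spn\{w_1,\ldots,w_{d-1}\}$, every $q_{i,j}$ lies strictly on the same side of this hyperplane as the $p_i$'s, which rules out any coincidence with a negated vector. Distinctness within $\{p_i\}$ is part of \cref{thm:phi-arith-prog}, and the collisions $p_i=q_{i',j'}$, $q_{i,j}=q_{i,j'}$ with $j\neq j'$, and $q_{i,j}=q_{i',j}$ with $i\neq i'$ are all excluded using that $\phi_{w_j}(x)=-\phi_{w_j}(y)$ whenever $x,y\in\partial B$ differ by a nonzero multiple of $w_j$, together with $t_j\neq 0$ and $1+i\lambda>0$. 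The one nontrivial case is $q_{i,j}=q_{i',j'}$ with $i\neq i'$ and $j\neq j'$, which forces $p_i-p_{i'}\in\spn\{w_j,w_{j'}\}$; I would rule this out by a generic perturbation. In case (a) of \cref{thm:phi-arith-prog} the fiber $\Phi^{-1}(t)\cap Z$ has positive dimension, so the $p_i$'s can be chosen one at a time outside finitely many excluded $(d-2)$-dimensional affine slices, while in case (b) a small perturbation of $t$ within the open ball $T$ (and, if needed, of $\lambda$) moves us off the closed nowhere-dense bad locus without affecting the rest of the argument.
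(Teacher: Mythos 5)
Your overall strategy — apply \cref{thm:phi-arith-prog}, define the companion points $q_{i,j}=p_i-(1+i\lambda)t_jw_j\in\partial B$, verify non-overlapping, and feed the resulting $U\subset\Z^{m+2d-2}$ into \cref{thm:unit-distance-grid-graph-general} — is exactly the paper's, and your arithmetic at the end is correct (indeed a slightly tighter bound than what the paper quotes). The one place where you and the paper diverge, however, is also the only place where your write-up has a real gap: the collision $q_{i,j}=q_{i',j'}$ with $i\neq i'$ and $j\neq j'$.

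The paper disposes of this case (and in fact of all collisions with $j\neq j'$, including your ``same $p_i$'' sub-case) with a single strict-convexity argument that requires no genericity: if $q_{i,j}=q_{i',j'}$, then an explicit computation shows that $q_{i,j}$, $q_{i',j}$, and $q_{i,j'}$ are three distinct collinear points of $\partial B$ (they differ by nonzero multiples of $t_jw_j-t_{j'}w_{j'}$), contradicting strict convexity. This is clean, requires no case split between cases (a) and (b) of \cref{thm:phi-arith-prog}, and shows the collision is \emph{impossible}, not merely avoidable.

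Your alternative — perturb $(t,\lambda)$ or the $p_i$'s to move off a ``closed nowhere-dense bad locus'' — is fine in case (a), where $\Phi^{-1}(t)\cap Z$ is infinite and you can greedily choose $p_1,\ldots,p_m$ avoiding finitely many forbidden points (though these are points, not $(d-2)$-dimensional slices as you write). But in case (b) the argument is not justified: the fibers $\Phi^{-1}((1+i\lambda)t)\cap Z$ may well be singletons for all $t\in T$, so there is no freedom in choosing the $p_i$'s, and it is then not at all clear that the set of $(t,\lambda)$ producing a collision is nowhere dense. The map $t\mapsto p_i(t)$ is only implicitly defined and you have not established any transversality or non-degeneracy that would make the bad set thin. (Of course the paper's argument shows a posteriori that this set is empty, but that observation cannot be used to certify your perturbation step — it replaces it.) A second, smaller imprecision: your cited reason for excluding $q_{i,j}=q_{i,j'}$ with $j\neq j'$ — that $\phi_{w_j}(x)=-\phi_{w_j}(y)$ when $x,y$ differ by a multiple of $w_j$ — does not apply to this pair; this case is actually immediate from linear independence of $w_j$ and $w_{j'}$ together with $t_j,t_{j'}\neq 0$ and $1+i\lambda>0$. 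To fix the proposal, replace the case-(b) perturbation argument with the three-collinear-points argument from strict convexity, which then also makes the case-(a) greedy selection unnecessary.
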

\begin{proof}
Let $w_1,\ldots,w_{d-1}$ be arbitrary linearly independent vectors and let $H=w_d^\perp$ be the hyperplane they span. Now, use \cref{thm:phi-arith-prog} to find some $t\in(\R\setminus\{0\})^d$, some scalar $\lambda\geq 0$, and some $p_1,\ldots,p_m\in\partial B$ on the same side of $H$ for which
\[\Phi(p_j)=(1+j\lambda)t\qquad\text{ for all }j\in[m].\]
By swapping the sign of $w_d$ if necessary, we may assume that $p_1\cdot w_d,\ldots,p_m\cdot w_d>0$.

Now, by the definition of $\Phi$, we have
\[q_{ij}:=p_j-\phi_{w_i}(p_j)w_i=p_j-(1+j\lambda)t_iw_i\in\partial B\qquad\text{for all } j\in[m],i\in[d-1].\]
We claim that the set
\[\mathcal U:=\{p_j: j\in[m]\}\cup\left\{q_{ij}: i\in[d-1],j\in[m]\right\}\subset\partial B\]
is non-overlapping of size $dm$. Indeed,
\begin{enumerate}
    \item No two elements of $\mathcal U$ are antipodes: since $q_{ij}\cdot w_d=p_j\cdot w_d>0$, the set $\mathcal U$ is contained within the half-space $\{x: w_d\cdot x>0\}$. 
    
    \item The $p_j$ are distinct by definition.
    
    \item We do not have $q_{ij}=p_{j'}$ for any $i,j,j'$, since the quantities
    \[\phi_{w_i}(p_{j'})=(1+j'\lambda)t_i\quad\text{and}\quad\phi_{w_i}(q_{ij})=-\phi_{w_i}(p_j)=-(1+j\lambda)t_i\]
    differ in sign (as $t_i\neq 0$).

    \item We do not have have $q_{ij}=q_{ij'}$ for $j\neq j'$ since
    \[q_{ij}-\phi_{w_i}(q_{ij})w_i=p_j,\]
    so if $q_{ij}=q_{ij'}$ then $p_j=p_{j'}$.

    \item We do not have $q_{ij}=q_{i'j'}$ for $i\neq i'$: if these were equal, then
    \begin{align*}
    q_{i'j}&=p_j-\phi_{w_{i'}}(p_j)w_{i'}=q_{ij}+\phi_{w_i}(p_j)w_i-\phi_{w_{i'}}(p_j)w_{i'}\\
    &=q_{ij}+(1+j\lambda)\left(t_iw_i-t_{i'}w_{i'}\right)\\
    q_{ij'}&=p_{j'}-\phi_{w_i}(p_{j'})w_i=q_{i'j'}+\phi_{w_{i'}}(p_{j'})w_{i'}-\phi_{w_i}(p_{j'})w_i\\
    &=q_{ij}-(1+j'\lambda)\left(t_{i}w_{i}-t_{i'}w_{i'}\right).
    \end{align*}
    Since $t_{i}w_{i}-t_{i'}w_{i'}\neq 0$, the three points $q_{i'j},q_{ij'},q_{ij}$ are distinct and collinear. Therefore, some line intersects $\partial B$ three times, contradicting the strict convexity of $B$.
\end{enumerate}

\begin{figure}\label{fig:non-overlapping}
\begin{tikzpicture}[xscale=1,yscale=0.75]
\node(pa) at (0,0) {$p_{j'}$};
\node(pb) at (-3,-2) {$p_{j}$};
\node(qa) at (2,0) {$q_{ij'}$};
\node(qb) at (0,-2) {$q_{ij}=q_{i'j'}$};
\node(qc) at (-3,-5) {$q_{i'j}$};
\draw[<-] (pa) -- node[above] {\tiny $(1+j'\lambda)t_iw_i$} (qa);
\draw[<-] (pa) -- node[left] {\tiny $(1+j'\lambda)t_{i'}w_{i'}$}(qb);
\draw[<-] (pb) -- node[above] {\tiny $(1+j\lambda)t_iw_i$} (qb);
\draw[<-] (pb) -- node[left] {\tiny $(1+j\lambda)t_{i'}w_{i'}$} (qc);
\draw[dashed] (2.5,1) .. controls (2.5,0.5) .. (qa) -- (qb) -- (qc) .. controls (-4.5,-6) .. node[below] {$\partial B$} (-5,-4);
\end{tikzpicture}
\caption{Case (5) of the non-overlapping property of $\mathcal U$.}
\end{figure}
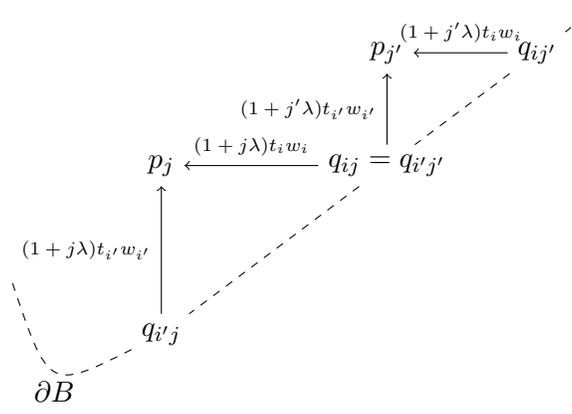

Now for $\ell\in[m]$, define
\[(v_1,\ldots,v_{m+2d-2})=\left(p_1,\ldots,p_m,-t_1w_1,\ldots,-t_{d-1}w_{d-1},-\lambda t_1w_1,\ldots,-\lambda t_{d-1}w_{d-1}\right),\]
and select $k_1=\cdots=k_\ell=2$ and $k_{\ell+1}=\cdots=k_m=1$ and $k_{m+1}=\cdots=k_{m+d-1}=\ell$ and $k_{m+d}=\cdots=k_{m+2d-2}=\ell^2$. Set
\[U=\{e_j:j\in[m]\}\cup\left\{e_j+e_{m+i}+je_{m+d-1+i}:j\in[m],i\in[d-1]\right\}\subset\Z^{m+2d-2}\]
so that 
\[\cU=\{c_1v_1+\cdots+c_{m+2d-2}v_{m+2d-2}: c\in U\}.\]
We just proved that $\cU$ is a non-overlapping set of size $dm$. Clearly the elements of $\cU$ are unit vectors under $\nm_B$. Thus, by \cref{thm:unit-distance-grid-graph-general}, the set
\[S_\ell=\{a_1v_1+\cdots+a_{m+2d-2}v_{m+2d-2}:a_i\in\{0,1,\ldots,k_i-1\}\text{ for }i\in[m+2d-2]\}\]
with $|S_\ell|\leq\prod_ik_i=2^\ell\ell^{3(d-1)}$ spans at least
\[|S_\ell|\cdot\sum_{c\in U}\prod_{i=1}^{m+2d-2}\left(1-\frac{c_i}{k_i}\right)\]
unit distances under $\nm_B$. This quantity is
\[|S_\ell|\cdot\sum_{j=1}^\ell\left(\frac12+(d-1)\frac12\frac{\ell-1}\ell\frac{\ell^2-j}{\ell^2}\right)\geq|S_\ell|\left(1-\frac2\ell\right)\sum_{j=1}^\ell\frac d2=\frac d2(\ell-2)|S_\ell|.\qedhere\]
\end{proof}

Now all that remains to prove the main theorem is to construct a set of exactly $n$ points by taking a union of translates of the GAPs provided by the previous proposition.

\begin{proof}[Proof of \cref{thm:main}]
Let $B\in\cB_d$ be a unit ball. If $B$ is not strictly convex, it is well-known that $U_{\nm_B}(n)=\Theta(n^2)$. Indeed, suppose $\partial B$ contains the segment connecting $x-y$ and $x+y$ for some $x,y\in\R^d$. Then the subgraph of the unit distance graph of $\nm_B$ induced by the segments $(0,y)$ and $(x,x+y)$ contains a copy of $K_{\infty,\infty}$. So, we may henceforth assume that $B$ is strictly convex.

We apply \cref{thm:many-unit-distances-sparse-n} with $m=n$, to find a nested sequence of sets $S_1\subseteq S_2\subseteq\cdots\subseteq S_n$ with the following properties: $s_m:=|S_m|\leq 2^m m^{3(d-1)}$ and $S_m$ spans $t_m\geq d(m-2)s_m/2$ unit distances. Note that since the $S_m$ are nested, we have $1\leq s_1\leq s_2\leq\cdots\leq s_n$. We also have the easy bound $s_n\geq n$.

Define $S_0$ to be a single point and set $s_0=1$ and $t_0=0$. For each $n$, we define a set $S$ of $n$ points that determines many unit distances as follows. Write $n=\sum_{i=1}^r s_{m_i}$ where $n\geq m_1\geq m_2\geq\cdots\geq m_r\geq 0$ is the lexicographically largest sequence with this property. Since $s_0=1$, there exists at least one sequence with this property. We will define $S=\bigcup_{i=1}^r (x_i+S_{m_i})$ where $x_1,\ldots,x_r$ are generically chosen vectors. In particular, the $x_i+S_{m_i}$ are disjoint, giving $|S|=n$. Write $t$ for the number of unit distances spanned by $S$. We know that $t\geq \sum_{i=1}^r t_{m_i}$ which we will now show is large.

From the definition of the $S_m$, we have that $S_{m-1}\subseteq S_m$ for each $m$. Applying \cref{thm:set-plus-ap-strong}, we see that for $m\geq 2$
\[\frac{s_m}{s_{m-1}}\leq 2\paren{\frac{m}{m-1}}^{d-1}\paren{\frac{m^2}{(m-1)^2}}^{d-1}\leq 2e^{3(d-1)/(m-1)}.\]
In particular, $s_m\leq 4s_{m-1}$ for $m\geq 5d$. Let $M$ be the smallest integer such that $s_M\geq n/\log_2 n$. (Since $s_n\geq n$, this is well-defined.) From the bound $s_m\leq 2^m m^{3(d-1)}$, we see that (for each fixed $d$) $M$ goes to infinity as $n$ goes to infinity.

If $n$ is sufficiently large, then $M\geq 5d$, implying that $s_M\leq 4s_{M-1}<4n/\log_2n$. Then, since we chose $(m_1,m_2,\ldots)$ to be lexicographically largest, we have the property
\[\sum_{i:m_i\geq M}s_{m_i}> n-s_M>n-\frac{4n}{\log_2n}.\]
Furthermore, for $m_i\geq M$, the conclusion of \cref{thm:many-unit-distances-sparse-n} gave us the bound
\[\frac{t_{m_i}}{s_{m_i}}\geq\frac{d(m_i-2)}2\geq\paren{\frac d2-o(1)}\log_2 s_{m_i}\geq \paren{\frac d2-o(1)}(\log_2 n-\log_2\log_2n)\]
where $o(1)\to 0$ as $n\to\infty$ for each fixed $d$. (The last inequality follows since for $m_i\geq M$ we have $s_{m_i}\geq s_M\geq n/\log_2 n$.)

Therefore we see that
\begin{align*}
t\geq\sum_{i:m_i\geq M}t_{m_i}
&\geq \paren{\sum_{i:m_i\geq M}s_{m_i}}\paren{\frac d2-o(1)}\paren{\log_2n-\log_2\log_2n}\\
&\geq\paren{n-\frac{4n}{\log_2 n}}\paren{\frac d2-o(1)}\log_2n\\
&\geq \paren{\frac d2-o(1)}n\log_2 n.\qedhere
\end{align*}
\end{proof}

\section{\texorpdfstring{$K_{d,m}$}{Kdm}'s in the unit distance graph}
\label{sec:kdm}

In this section we prove \cref{thm:bipartite}, finding for each positive integer $m$ a copy of $K_{d,m}$ in the unit distance graph for an open dense subset of norms.

We begin by introducing the machinery we will use. To find a copy of $K_{d,m}$ in the unit distance graph of $\nm_B$, we must find $d$ translates of $\partial B$ which intersect in $m$ points. To show that the set $\cA_m$ of norms we construct is open, we want to show that these intersections persist under small perturbations of the unit ball. The principal tool to ensure this kind of stability is the \emph{Brouwer mapping degree}.

The Brouwer mapping degree is an invariant of continuous maps which should be thought of as a robust ``signed count'' of preimages. We refer the interested reader to the textbook \cite[Chapter 10]{Teschl} for a treatment of the mapping degree requiring only elementary analysis and measure theory.

Consider a bounded open set $U \subset \mathbb R^d$, a continuous map $f\colon \overline U \to \mathbb R^d$, and a point $y \in \mathbb R^d \setminus f(\partial U)$. The \emph{degree} of $f$ with respect to $U$ and $y$, denoted by $\deg(f,U,y)$, is an integer that satisfies the following properties:
\begin{enumerate}
    \item if $\deg(f,U,y) \neq 0$, then $y \in f(U)$; \label{item:nonzero}
    \item if $f,g\colon \overline U \to \mathbb R^d$ are continuous maps such that $\|f(x)-g(x)\|_2 < \|f(x)-y\|_2$ for all $x \in \partial U$, then $\deg(f,U,y) = \deg(g,U,y)$; \label{item:stability}
    \item if $f\colon \overline U\to\R^d$ is continuously differentiable and the Jacobian $J_f(x)=\det(\partial_i f_j(x))_{i,j=1}^d$ is nonzero at all $x \in f^{-1}(y)$, then $\deg(f,U,y) = \sum_{x \in f^{-1}(y)} \sign\, J_f(x)$. \label{item:determinant}
\end{enumerate}
The existence of such a notion of degree follows from \cite[Theorems 10.1 and 10.4]{Teschl}.

In the first part of the proof, we construct a local model of a unit ball which contains a $K_{d,m}$ in its unit distance graph and show that this property is stable under small perturbations, ensuring openness. In the second part of the proof, we show that, near any unit ball in $\cB_d$, we can find one which looks like our local model, establishing density.

Fix $n\geq 1$. The local model will be the graph of a convex function of $d-1$ real variables. Let $\chi\colon\R^{d-1}\to[0,1]$ be a smooth compactly-supported bump function with the properties that $\chi(x) = 1$ for $\|x\|_2 \leq 1$ and $\chi(x) = 0$ for $\|x\|_2 \geq 2$ (as well as $0\leq\chi(x)\leq 1$ for all $x$). Choose a constant $h>0$ small enough such that the function
    \begin{align*}
        \rho(x_1,x_2,\ldots,x_{d-1}) = x_1^2 + x_2^2 + \ldots + x_{d-1}^2 + h \chi(x) \cos(\pi n x_1)
    \end{align*}
is convex. This is possible since the Hessian of $x_1^2+\ldots+x_{d-1}^2$ is twice the identity matrix, $2I_{d-1}$, while the Hessian of $\chi(x)\cos(\pi n x_1)$ is entry-wise bounded. Choosing $h$ small enough, the Hessian can be made to be positive definite everywhere. Let us denote the graph of $\rho$ over the ball of radius 4 by $\Sigma_0=\Sigma_0(n)$, i.e.,
\[\Sigma_0(n)=\{(x;\rho(x))\in\R^d:x\in\R^{d-1}\text{ with }\|x\|_2\leq 4\}.\]

\begin{lemma}
\label{thm:local-model-stable}
For $d\geq 3$ and $n\geq 1$, let $B_0\in\cB_d$ be a unit ball such that $\partial B_0$ contains the image of $\Sigma_0(n)$ under an invertible affine transformation. Then there exists $\epsilon>0$ such that, for every $B\in\cB_d$ with $d_H(B_0,B)<\epsilon$, the unit distance graph of $\nm_{B}$ contains a copy of $K_{d,2n}$.
\end{lemma}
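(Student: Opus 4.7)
The plan is to exhibit an explicit copy of $K_{d,2n}$ in the unit distance graph of $\nm_{B_0}$ and then show that the configuration survives any sufficiently small Hausdorff perturbation by invoking the Brouwer mapping degree. After pre-composing with the inverse of the affine transformation, I may assume $\Sigma_0(n)\subset\partial B_0$ directly. Convexity together with the uniform bound on the slope of $\rho$ on $\{\|x\|_2\leq 4\}$ implies that for every $B\in\cB_d$ with $d_H(B,B_0)$ small enough, the portion of $\partial B$ projecting onto $\{\|x\|_2\leq 3\}$ is itself the graph of a convex function $\rho_B$, and $\|\rho_B-\rho\|_\infty\to 0$ as $d_H(B,B_0)\to 0$. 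Setting $c_1=0$ and writing $c_j=(u_j,w_j)\in\R^{d-1}\times\R$ for $j=2,\dots,d$, the existence of a $K_{d,2n}$ with these centers is equivalent to the existence of $2n$ common zeros in $\R^{d-1}$ of the map $G^B=(G_2^B,\dots,G_d^B)$ defined by $G_j^B(x)=\rho_B(x)-\rho_B(x-u_j)-w_j$.

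The heart of the argument is to choose the centers so that $G^{B_0}$ has $2n$ regular zeros. The key trick is to exploit the $2/n$-periodicity of $\cos(\pi n x_1)$: I take $u_2=(1/n,\beta,0,\dots,0)$ and $u_3=2u_2$ for a small nonzero $\beta$. In the region where $\chi(x)=\chi(x-u_2)=\chi(x-u_3)=1$, the identities $\cos(\pi n(x_1-2/n))=\cos(\pi n x_1)$ and $\cos(\pi n(x_1-1/n))=-\cos(\pi n x_1)$ make $G_3$ purely affine in $x$ while doubling the oscillatory contribution in $G_2$; eliminating the affine constraint $G_3=0$ from $G_2=0$ and tuning $w_2,w_3$ appropriately reduces the pair to $\cos(\pi n x_1)=0$, with exactly $2n$ roots $x_1^{(k)}=(2k+1)/(2n)$ for $k=-n,\dots,n-1$ in $(-1,1)$. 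For the remaining coordinates I take $u_j=\epsilon e_{j-1}$ and $w_j=-\epsilon^2$ with $\epsilon>0$ small, so that in the flat region $G_j(x)=2\epsilon x_{j-1}$ and hence $G_4=\dots=G_d=0$ pins $x_3=\dots=x_{d-1}=0$. A direct computation shows that the Jacobian of $G^{B_0}$ at each candidate zero is block-diagonal with determinant of order $\beta n h(2\epsilon)^{d-3}\neq 0$, so every such zero is regular.

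To transfer to nearby $B$, I enclose each regular zero $\bar x^{(k)}$ in a small disjoint open box $U_k$ on whose boundary $\|G^{B_0}\|_2$ is bounded below by some $\mu>0$; property~(3) gives $\deg(G^{B_0},U_k,0)=\pm 1$. Since $\|G^B-G^{B_0}\|_\infty\leq 2\|\rho_B-\rho\|_\infty\to 0$ as $d_H(B,B_0)\to 0$, property~(2) preserves these degrees for every $B$ within some sufficiently small Hausdorff neighborhood of $B_0$, and property~(1) then yields a zero of $G^B$ inside each $U_k$. These produce $2n$ distinct points of $\partial B$ which, together with $c_1,\dots,c_d$, form the required copy of $K_{d,2n}$. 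The main technical obstacle I anticipate is confirming that all $2n$ candidate zeros lie in the domain where the clean formulas for $G^{B_0}$ hold: the linear constraint $G_3=0$ pulls the candidates off the $x_1$-axis along a line whose slope depends on $\beta$, and one must either tune $\beta$ so that this line stays inside the $\chi\equiv 1$ region for the full range $|x_1|\lesssim 1$, or extend the analysis to the annular region $1<\|x\|_2<2$ where $\chi$ is only bounded below but the oscillating coefficient of $G_2$ remains comparable to $2h$, in either case verifying that the total count of zeros is exactly $2n$.
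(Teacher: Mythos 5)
Your high-level strategy---realizing the $K_{d,2n}$ as a collection of transversal intersections of $d$ translated copies of $\partial B_0$ and then invoking the Brouwer degree properties (1)--(3) to preserve these intersections under small Hausdorff perturbation---is the same as the paper's, and your degree-theory portion (enclosing each regular zero in a small box and using properties (2) and (3)) is essentially a restatement of the paper's argument. One minor technical difference: you pass to graph coordinates, writing the map $G^B\colon\R^{d-1}\to\R^{d-1}$, which requires justifying that the relevant patch of $\partial B$ is the graph of a function $\rho_B$ with $\|\rho_B-\rho\|_\infty\to 0$; the paper instead works directly with $\Phi_B\colon\R^d\to\R^d$, $\Phi_B(x)=(\|x-q_j\|_B-1)_j$, avoiding this lemma.

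The gap is in your explicit choice of translation vectors. You exploit the $2/n$-periodicity by taking $u_2=(1/n,\beta,0,\dots,0)$ and $u_3=2u_2$, which requires all $2n$ candidate zeros $x^{(k)}$ \emph{and} the translates $x^{(k)}-u_2$, $x^{(k)}-u_3$ to lie in the flat region $\{\chi\equiv 1\}=\{\|\cdot\|_2\leq 1\}$. This fails. For $n=1$, $\|u_3\|_2=2\sqrt{1/n^2+\beta^2}=2\sqrt{1+\beta^2}>2$, so $B(0,1)\cap B(u_3,1)=\emptyset$ and there is no such region at all. For $n\geq 2$, the candidates have $x_1\in[-1+1/(2n),\,1-1/(2n)]$, while the projection of $B(0,1)\cap B(u_3,1)$ onto the $x_1$-axis is (roughly) $[2/n-1,\,1]$; since $-1+1/(2n)<2/n-1$, the leftmost candidates fall outside. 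Your fallback of extending into the annulus $1<\|x\|_2<2$ does not immediately rescue this: there the cancellation that made $G_3$ affine breaks down, since $\cos(\pi n(x_1-2/n))=\cos(\pi n x_1)$ gives an uncancelled contribution $h\bigl[\chi(x)-\chi(x-u_3)\bigr]\cos(\pi n x_1)$ whenever $\chi(x)\neq\chi(x-u_3)$, and there is no clear control on the resulting zero set. The paper avoids all of this by a different choice of translations: it takes $p_1,\dots,p_{d-1}\in\{0\}\times\R^{d-2}$ with $2<\|p_j\|_2<3$. Because the first coordinate of $p_j$ is zero, the cosine is unchanged under translation, and because $\|p_j\|_2>2$ the bump $\chi$ vanishes at the translated points so the quadratic formula holds exactly there; the intersection points then fall at $x_1=(2k+1)/(2n)$ automatically (where $\cos=0$ but $\sin\neq 0$, giving the nondegenerate Jacobian). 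You would need to replace your choice of $u_j$ by something in this spirit to make the construction go through.
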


\begin{proof}
The conclusion is equivalent to the existence of $d$ translates of $\partial B$ which intersect in $2n$ points.

Let $p_1,\ldots,p_{d-1}\in\{0\}\times\R^{d-2}\subset\R^{d-1}$ be affinely independent points which satisfy $2 < \|p_j\|_2 < 3$ for all $j =1,\ldots,d-1$.  Now consider the $d-1$ translates of $\Sigma_0$ defined by
\[\Sigma_j=\Sigma_0-(p_j;\rho(p_j))\]
for $j=1,\ldots, d-1$.
We can compute that the intersection of these $d$ surfaces contains the following $2n$ points:
\[\bigcap_{0\leq j\leq d-1}\Sigma_j\supset\left\{\left(\frac{2k+1}{2n},0,\ldots,0,\paren{\frac{2k+1}{2n}}^2\right): -n \leq k < n \right\}.\]
We now summarize the remainder of the proof. The normal vectors to $\Sigma_0,\ldots,\Sigma_{d-1}$ at each of these intersection points can be computed explicitly; they are linearly independent. In other words, each of these $2n$ intersections is transversal. We can conclude by the well-known fact that transversal intersections persist under small perturbations. We will give an elementary deduction of this fact in our setting from the degree theory described above.

\begin{figure}[ht]
    \includegraphics[width=0.6\linewidth]{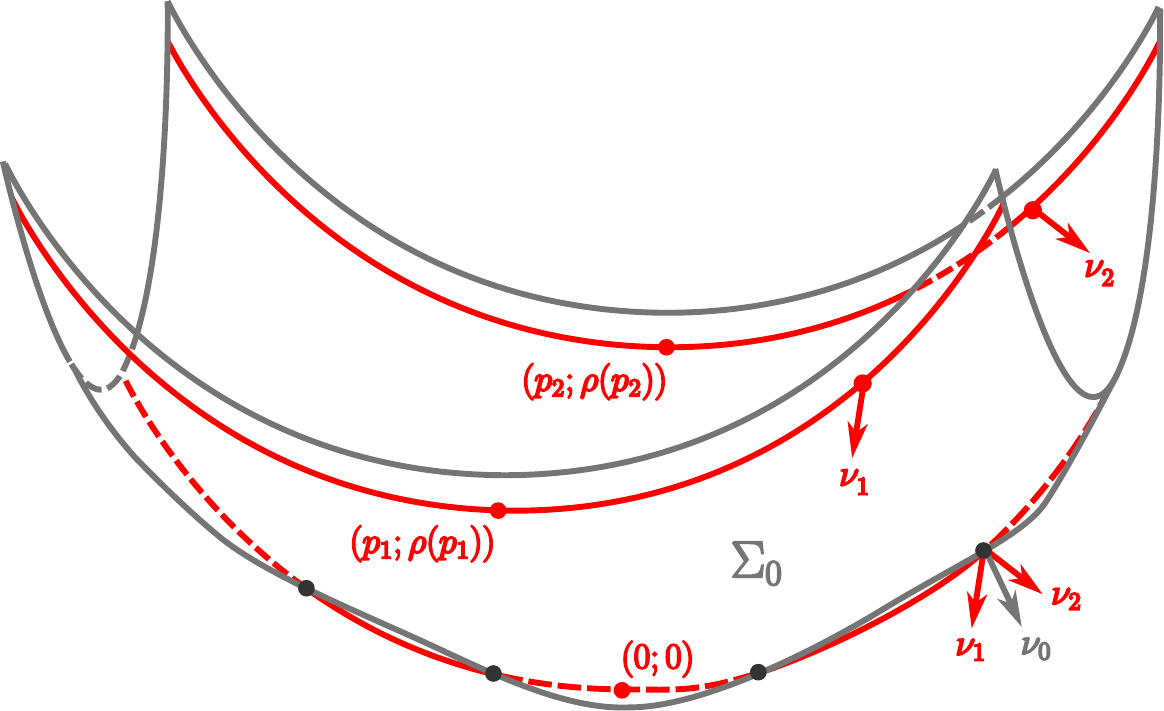}
    \caption{The construction from \Cref{thm:local-model-stable} in three dimensions. Recall that $\Sigma_1$ and $\Sigma_2$ are the translates of $\Sigma_0$ which send $(p_1;\rho(p_1))$ resp.~$(p_2;\rho(p_2))$ to $(0,0)$. The two parabolas through $(p_1;\rho(p_1))$ and $(p_2;\rho(p_2))$ lie on $\Sigma_0$, hence $\Sigma_1 \cap \Sigma_2$ is the parabola through $(0;0)$. The surface $\Sigma_0$ is ``crinkled'' near the origin so that this parabola cuts it transversally, creating transversal intersections of $\Sigma_0$, $\Sigma_1$ and $\Sigma_2$.}
    \label{fig:crinkled-convex-set}
\end{figure}

For simplicity, we will perform the computation with a specific choice of $B_0$, namely
\[B_0=\{(x;y)\in\R^d:|y|\leq16-\rho(x)\}.\]
The only section of $B_0$ we will make reference to is the portion near the translated copy of $\Sigma_0$ in $\partial B_0$. So, this specification has the benefit that we may work in a simple coordinate system relative to the coordinates in which we have defined $\Sigma_0$. All of the following arguments go through identically for any ball $B_0$ whose boundary contains an affine image of $\Sigma_0$ once an appropriate coordinate transformation is applied.

Set $q_0=(0;0)$ and $q_j=(p_j;\rho(p_j))$ for $1\leq j\leq d-1$. Since $\partial B_0$ contains the translated surface $\Sigma_0-(0;16)$, we see that $\partial B_0-q_j$ contains the translated surfaces $\Sigma_j-(0;16)$. Thus the computation above shows that the $d$ translates of $\partial B_0-q_0,\ldots,\partial B_0-q_{d-1}$ intersect in $2n$ points. We must show that there exists $\epsilon>0$ such that the same is true for any other unit ball which is $\epsilon$-close to $B_0$ in Hausdorff distance. To begin, for each $B\in\cB_d$, define the continuous function $\Phi_B\colon\R^d\to\R^d$ by
\[\Phi_B(x) = \left( \|x+q_j\|_{B} - 1 \right)_{j=0}^{d-1}.\]

Fix $-n\leq k< n$. Then set $x_0=\left(\frac{2k+1}{2n},0,\ldots,0,(\frac{2k+1}{2n})^2-16\right)$. By construction, $\Phi_{B_0}(x_0)=0$. In a neighborhood of $x_0$, the function $\Phi_{B_0}$ inherits smoothness from $\rho$. We now are in a position to apply property (\ref{item:determinant}) to compute the degree of $\Phi_{B_0}$; to do this, we need to compute the Jacobian matrix of $\Phi_{B_0}$ at $x_0$.

For any point $x\in\Sigma_j-(0;16)$, one can see that the gradient of the function $\|x+q_j\|_{B_0}-1$ is a non-zero multiple of the normal vector to $\Sigma_j-(0;16)$ at that point.
Furthermore, since $\Sigma_0-(0;16)$ is the graph of the function $\rho-16$, we see that the normal vector to $\Sigma_0-(0;16)$ at the point $(x,\rho(x)-16)$ is a non-zero multiple of $(\nabla\rho(x);-1)$. In particular, at $x_0$ we can compute that this is
\[\nu_0:=\paren{\paren{\frac{2k+1}n-(-1)^k h \pi n}e_1;-1}.\]
For $1\leq j\leq d-1$, using the fact that $\Sigma_j$ is a translate of $\Sigma_0$, we compute that the normal to $\Sigma_j-(0;16)$ at $x_0$ is $\nu_j:=((2k+1)/n \, e_1+2p_j;-1)$. Thus we see that the Jacobian matrix of $\Phi_{B_0}$ at $x_0$ has columns which are non-zero multiples of $\nu_0,\ldots,\nu_{d-1}$. The vectors $\nu_0,\ldots,\nu_{d-1}$ are linearly independent: the affine span of $\nu_1,\ldots,\nu_{d-1}$ is the $(d-2)$-flat defined by $x_1=(2k+1)/n$ and $x_d=-1$, so the linear span of these vectors is the hyperplane $x_1+(2k+1)/n \, x_d=0$, and $\nu_0$ is not in this hyperplane. Therefore, the Jacobian of $\Phi_{B_0}$ does not vanish at $x_0$.

We now apply the inverse function theorem to $\Phi_{B_0}$. Since $\Phi_{B_0}(x)$ is differentiable with continuous derivative and its Jacobian does not vanish at $x_0$, there exists an open neighborhood $U$ of $x_0$ so that $\Phi_{B_0}\colon\overline U\to\R^d$ is injective. We can pick $U$ sufficiently small such that $\overline{U}$ does not contain $\left(\frac{2k'+1}{2n},0,\ldots,0,(\frac{2k'+1}{2n})^2-16\right)$ for any $k'\neq k$. By property (\ref{item:determinant}) of degree, we see that $\deg(\Phi_{B_0},U,0)=\pm1$, since 0 has precisely one preimage in $U$.

Now, choose $\epsilon>0$ small enough that $\|\Phi_{B}(y) - \Phi_{B_0}(y) \|_2 < \| \Phi_{B_0}(y) \|_2$ for all $y \in \partial U$ and all $B\in\cB_d$ with $d_H(B,B_0)<\epsilon$. This is possible since $\partial U$ is compact, $0\not\in\Phi_{B_0}(\partial U)$, and $\|x\|_B$ is a continuous function of $(x,B)\in\R^d\times\cB_d$. By property (\ref{item:stability}) of degree, we see that $\deg(\Phi_B,U,0)=\deg(\Phi_{B_0},U,0)$ for all $B\in\cB_d$ with $d_H(B,B_0)<\epsilon$. In particular, this degree is nonzero, so by property (\ref{item:nonzero}) of degree, we see that there exists $x\in U$ with $\Phi_B(x)=0$. By definition, this point $x$ is in the intersection of the $d$ translates of $\partial B$ centered at $q_0,\ldots,q_{d-1}$. Repeating this argument for each $-n\leq k< n$, we find $2n$ distinct points in the intersection of these $d$ translates of $\partial B$.
\end{proof}

\begin{proof}[Proof of \cref{thm:bipartite}]
Set $n=\ceil{m/2}$. Let $\cX_n\subseteq\cB_d$ be the set of unit balls whose boundary contains, for some $\delta>0$, a translated copy of the scaled surface
\[\Sigma_\delta:=\{(x;\delta^3\rho(x/\delta))\in\R^d:x\in\R^{d-1}\text{ with }\|x\|_2\leq4\delta\}.\]
By \cref{thm:local-model-stable}, for each $B\in\cX_n$, there exists some $\epsilon_B>0$ such that every $B'\in\cB_d$ with $d_H(B,B')<\epsilon_B$ contains a copy of $K_{d,2n}$ in its unit distance graph. We define the union of these open neighborhoods 
\[\cA_m=\{B'\in\cB_d:\text{there exists }B\in\cX_n\text{ such that }d_H(B,B')<\epsilon_B\}.\]
Clearly $\cA_m\subseteq\cB_d$ is open. To complete the proof, we must show that it is dense.

The prefactor $\delta^3$ is chosen so that $\delta^3\rho(x/\delta)$ converges uniformly to the zero function on compact sets as $\delta \to 0$. More precisely, we have the bounds $|\delta^3\rho(x/\delta)-\delta\|x\|_2^2|\leq h\delta^3$. Recall that we defined $\rho$ so that $\inf\rho=h$ where $h>0$ is taken very small. Then, for $\delta, R>0$, define the set 
\[X_{\delta,R}=\{(x;y)\in\R^d:\|x\|_2\leq R\text{ and }y\geq \delta^3(\rho(x/\delta)-h).\}\]
The above calculation shows that as $\delta\to 0$, the sets $X_{\delta,R}$ all live in the upper half-space and converge to the cylinder $\{(x;y)\in\R^d:\|x\|_2\leq R\text{ and }y\geq 0\}$ in Hausdorff distance.

Let $B\in\cB_d$ be an arbitrary unit ball. For any $\epsilon>0$, we will find an $\epsilon$-close element of $\cX_n$. Suppose that $B$ has height $2y_0>0$ in the $x_d$-direction. In other words, $B\subset\R^{d-1}\times[-y_0,y_0]$ and there exists some $x_0\in\R^{d-1}$ such that $(x_0;-y_0),(-x_0;y_0)\in\partial B$. First we chop a thin slice off the bottom and top of $B$. In particular, we can pick $0<y_1<y_0$ such that $B':=B\cap(\R^{d-1}\times[-y_1,y_1])$ is $\epsilon/2$-close to $B$. Since $B$ is convex and contains a neighborhood of 0, it contains a cone with apex $(x_0;-y_0)$ and base centered at the origin. Thus $B'$ contains a frustum with bases on the hyperplanes $\R^{d-1}\times\{-y_1\}$ and $\R^{d-1}\times\{0\}$. Next, we can find a small right cylinder $C$ in $B'$ with one base on the hyperplane $\R^{d-1}\times\{-y_1\}$. Say its bases have radius $4\delta_0$, the lower base is centered at $(x_1;-y_1)$, and the height is $(16-h)\delta_0^3$, i.e., $C=\{(x;y):\|x-x_1\|_2\leq 4\delta_0\text{ and }y\in[-y_1,-y_1+(16-h)\delta_0^3]\}$.
See \cref{fig:cylinder} for an illustration of this in two dimensions.

\begin{figure}
\begin{tikzpicture}[scale=0.95]
\draw plot [smooth cycle] coordinates {(-4,-2) (-3.7, -1.5) (-1.3,0) (2, 1.5) (4,2) (3.7, 1.5) (1.3,0) (-2, -1.5)};
\draw[dashed] (-4,-2) -- (-1.3,0) -- (1.3,0) -- cycle;
\draw (-2,-1.5)--(-3.7,-1.5);
\draw (3.7,1.5)--(2,1.5);
\draw[fill = black!50] (-2.75,-1.5) -- (-3.05,-1.5) -- (-3.05,-1.3) -- (-2.75, -1.3) -- cycle;
\draw (-2.6,-1.2) node {$C$};  
\filldraw[black] (-4,-2) circle (1pt) [anchor=east] node {$(x_0;-y_0)$};
\filldraw[black] (-2.9,-1.5) circle (1pt) [anchor=north west] node {$(x_1;-y_1)$};
\filldraw[black] (4,2) circle (1pt) [anchor=west] node {$(-x_0;y_0)$};
\draw (2,1.5) [anchor = south east] node {$B$};  
\draw (-1,0) [anchor = south east] node {$B'$};  
\end{tikzpicture}
\caption{Construction of $C$ inside of $B'$.}
\label{fig:cylinder}
\end{figure}

Pick $R$ so that $B'$ is contained within the Euclidean ball of radius $R/2$. Then for each $0<\delta<\delta_0$, we modify $B'$ to place a copy of $\Sigma_\delta$ inside the cylinder $C$. More precisely, define the set
\[B_\delta=B'\cap(X_{\delta,R}+(x_1;-y_1))\cap(-X_{\delta,R}-(x_1;-y_1)).\]
Note that $B_\delta$ is clearly still a unit ball for all $\delta>0$. Furthermore, $B_\delta\in\cX_n$ for all $\delta\in(0,\delta_0)$ since $\partial B_\delta$ contains a translate of $\Sigma_\delta$ in the cylinder $C$. Finally, we claim that $B_\delta$ converges to $B'$ in Hausdorff distance as $\delta\to 0$. This is because $X_{\delta,R}+(x_1;-y_1)$ converges to the cylinder $\{(x;y):\|x-x_1\|_2\leq R\text{ and }y\geq -y_1\}$ and $-X_{\delta,R}-(x_1;-y_1)$ converges to the cylinder $\{(x;y):\|x+x_1\|_2\leq R\text{ and }y\leq y_1\}$. We chose $R,x_1,y_1$ so that $B'$ is contained in the intersection of these two cylinders. Thus $B_\delta$ converges to $B'$ as $\delta\to0$, so there exists some choice of $\delta\in(0,\delta_0)$ so that $d_H(B_\delta,B')<\epsilon/2$. For this $\delta$ we have $d_H(B,B_\delta)<\epsilon$ and $B_\delta\in\cX_n$. This proves that $\cA_m$ is open and dense, as desired.
\end{proof}

\begin{rem}
It is possible to use \cref{thm:bipartite-main} to provide a different proof of \cref{thm:main} for $d\geq 3$ and a comeagre set of $d$-norms. Indeed, consider a $K_{d,2d^2m}$ in the unit distance graph of $\nm_B$. Let $u_1,\ldots,u_d$ be the vertices on the left. By greedily selecting vertices, one can find $v_1,\ldots,v_m$ on the right so that $\{v_i-u_j:i\in[m],j\in[d]\}$ is a non-overlapping set of $dm$ unit vectors. Then a similar construction to \cref{thm:many-unit-distances-sparse-n} produces a set of points spanning many unit distances.
\end{rem}

\bibliographystyle{amsplain0}
\bibliography{main}

\end{document}